\newcommand{\F}{\text{F}}
\newcommand{\e}{\mathrm{e}}
\newcommand{\W}{\mathbf{W}}
\newcommand{\R}{\mathbb{R}}
\newcommand{\C}{\mathbb{C}}
\newcommand{\E}{\mathbb{E}}
\newcommand{\I}{\mathbf{I}}
\newcommand{\dif}{\mathrm{d}}
\newtheoremstyle{definition}%
  {0.5em}
  {\topsep}
  {\upshape}
  {}
  {\bfseries}
  {.}
  { }
  {}
\theoremstyle{plain}
\newtheorem{theorem}{Theorem}
\newtheorem{lemma}{Lemma}
\newtheorem{corollary}{Corollary}
\newtheorem{remark}{Remark}
\theoremstyle{definition}
\newtheorem{assumption}{Assumption}
\DeclareMathAlphabet\mathbfcal{OMS}{cmsy}{b}{n}
\begin{document}


%
%

\begin{center}
{\Large
    {\sc Hypergeometric Functions of Matrix Arguments and Linear Statistics of Multi-Spiked Hermitian Matrix Models}
}
\bigskip

Damien Passemier$^{1}$ \& Matthew R. McKay$^{2}$ \& Yang Chen$^{3}$
\bigskip

{\it
$^{1,2}$Department of Electronic and Computer Engineering\\
Hong Kong University of Science and Technology (HKUST)\\
Clear Water Bay, Kowloon, Hong Kong\\
$^{1}$damien.passemier@gmail.com $^{2}$eemckay@ust.hk\\
\bigskip
$^{3}$Faculty of Science and Technology,
Department of Mathematics\\
University of Macau,
Avenue Padre Tom\'as Pereira, Taipa Macau, China\\
yangbrookchen@yahoo.co.uk\\

}
\end{center}
\bigskip

{\bf Abstract.} This paper derives central limit theorems (CLTs) for general linear spectral statistics (LSS) of three important multi-spiked Hermitian random matrix ensembles. The first is the most common spiked scenario, proposed by Johnstone, which is a central Wishart ensemble with fixed-rank perturbation of the identity matrix, the second is a non-central Wishart ensemble with fixed-rank noncentrality parameter, and the third is a similarly defined non-central $F$ ensemble.  These CLT results generalize our recent work \cite{Passemier2} to account for multiple spikes, which is the most common scenario met in practice.  The generalization is non-trivial, as it now requires dealing with hypergeometric functions of matrix arguments.  To facilitate our analysis, for a broad class of such functions, we first generalize a recent result of Onatski \cite{Onatski3} to present new contour integral representations, which are particularly suitable for computing large-dimensional properties of spiked matrix ensembles.  Armed with such representations, our CLT formulas are derived for each of the three spiked models of interest by employing the Coulomb fluid method from random matrix theory along with saddlepoint techniques.  We find that for each matrix model, and for general LSS, the individual spikes contribute additively to yield a $O(1)$ correction term to the asymptotic mean of the linear statistic, which we specify explicitly, whilst having no effect on the leading order terms of the mean or variance.

\smallskip

{\bf Keywords.} Random matrix theory, high-dimensional statistics, spiked population model, hypergeometric function, Wishart ensembles, $F$-matrix.

\section{Introduction}

Linear spectral statistics (LSS) are of fundamental importance in multivariate analysis. Such statistics are characterized, quite generally, by summations of functions of the individual eigenvalues of random matrices.  Of prime interest  are sample covariance matrices, constructed based on $m$ observations (samples) of a $n$-dimensional random vector (variables), or suitably defined $F$ matrices.  Many classical results are available which specify the asymptotic distribution of certain LSS as the number of observations $m$ become asymptotically large, for fixed $n$ (see., e.g., \cite{Anderson,Fujikoshi}).  However, modern applications often deal with high-dimensional data sets, for which $n$ and $m$ have similar order, and thus classical asymptotics no longer apply.  This has inspired a new wave of research, aimed at characterizing the limiting distributions of LSS in the double-asymptotic regime, with $n$ and $m$ both large, using tools from asymptotic random matrix theory.  The asymptotic behavior is typically found to be markedly different from the classical asymptotic setting, whilst giving substantially improved accuracy for various practical applications.  

Under double-asymptotics, central limit theorems (CLTs) for LSS of various random matrix ensembles have now been derived, providing generic asymptotic formulas for the limiting mean and variance (see, e.g., \cite{Chen,Diaconis,Lytova,Zheng}). Much of this attention has focused on scenarios with identity population covariance (e.g., \cite{Chen,Lytova,AndersonCLT}), although some results for more general matrix models have also appeared \cite{Bai04}.  

In this paper, our main focus is on three related classes of so-called ``spiked'' Hermitian random matrix ensembles: (i) central Wishart with finite-rank perturbation of the identity (proposed in \cite{Johnstone}, which we refer to as ``Johnstone's spiked model''), (ii) non-central Wishart with fixed-rank noncentrality, and (iii) similarly defined non-central $F$ matrices.  In our recent work \cite{Passemier2}, we demonstrated that each of these models shared a common feature, with their joint eigenvalue distributions being expressible in a similar contour-integral form. These were initially derived in \cite{Forrester,Wang,Mo,Onatski} and \cite{Dharmawansa} for Johnstone's spiked model and the non-central Wishart spike model respectively, whilst given as a new result in \cite{Passemier2} for the spiked $F$ model. If one disregards the contour integral, then the joint eigenvalue density in each case yields the same general structure as typical of ``classical'' Hermitian random matrix ensembles (i.e., the Gaussian, Laguerre and Jacobi unitary ensembles \cite{Mehta}), albeit with a modified weight function.  This is particularly important, as it allows one to  employ powerful methods designed for such classical ensembles in the study of spiked models.  This was precisely the approach undertaken in \cite{Passemier2}, where we employed the framework \cite{Chen}, designed for non-spiked models based on Dyson's Coulomb fluid method \cite{Dyson} (see also \cite{Chen+Manning, chemancond, Chen+Ismail,  BasorChen, Simon2006, Vivo2008, Dean2008}), as well as saddlepoint integration techniques, to derive CLTs for each of the three spiked models under consideration as the matrix dimensions grew large. 

Our recent results in \cite{Passemier2} assumed the presence of a single spike only, which for many practical applications may not be reasonable. Examples include \cite{Anderson} in psychology, \cite{Kritchman2,Nadakuditi,Dumont,Bianchi,CouilletBook,Couillet} in signal processing, \cite{Naes, Kritchman} in physics of mixture, \cite{Bouchaud,Torun} in finance, \cite{Dahirel, Quadeer2013} in computational immunology/virology, \cite{Bai09,Bai13} in statistics, in addition to others (e.g., \cite{Johnstone,Passemier1}). 
For scenarios with multiple spikes, there are relatively few existing results concerning  LSS, and the results which are available focus primarily on Johnstone's spiked model.  These include \cite{Wang3}, which derived a CLT for general LSS, expressing the limiting mean and variance in terms of contour integrals,  as well as \cite{Onatski3,Passemier,Wang2,Onatski2}, which considered specific linear statistics (i.e., for specific applications).  For alternative spiked models, such as the non-central Wishart and $F$ scenarios, results concerning LSS are currently absent, beyond the single-spike scenario considered in \cite{Passemier2}.


The primary objective of this paper is to close this gap by deriving CLTs for general LSS under all three spiked models indicated above, allowing for arbitrary numbers of spikes.  This generalization is substantial, since one can no longer rely on the contour-integral-based joint eigenvalue densities in \cite{Wang,Mo,Onatski} and \cite{Dharmawansa}.  Thus, the first major step is to obtain expressions for the joint eigenvalue densities which, for the three spiked models, are classically expressed in terms of hypergeometric functions of matrix arguments \cite{James}. Such functions are fundamental objects arising in multivariate analysis and random matrix theory, and are often difficult to handle.  Quite generally, they are denoted
\begin{align} \label{eq:Def}
{}_p F_q^{(\alpha)} \left( a_1, \ldots, a_p; b_1, \ldots, b_q ; \mathbf{X}, \mathbf{Y}   \right),  
\end{align}
where $\mathbf{X}$ and $\mathbf{Y}$ are $n \times n$ Hermitian matrix arguments. For $\alpha = 2$, $1$, and $1/2$, these functions are often described as solutions to matrix integrals over the orthogonal, unitary, or symplectic groups respectively (see, for example, \cite{Muirhead,Gupta}, and \cite{James}); alternatively, they may be described via infinite series expansions involving Jack polynomials.  In addition, for the special case $\alpha = 1$ (only), they may also be written as $n \times n$ determinants of scalar ${}_p F_q$ functions \cite{Khatri70, Gross89}. Whilst computational algorithms have been developed recently (e.g., \cite{Koev}),  such representations are still difficult to describe when the matrix dimension $n$ is large, which is often the realm of interest for problems in random matrix theory.  

For each of our three spiked models of interest, the joint eigenvalue density involves a specific particularization of (\ref{eq:Def}): For Johnstone's spiked model it is a ${}_0 F_0^{(1)}$, for the non-central Wishart spiked model it is a ${}_0 F_1^{(1)}$, whilst for the non-central $F$ spiked model it is a ${}_1F_1^{(1)}$.   However, it turns out that for all three cases, one of the matrix arguments in the associated hypergeometric function has rank $r \leq n$, with $r$ denoting the number of spikes. For such a reduced-rank scenario, in some exciting and very recent work by Onatski \cite{Onatski3}, a new representation was derived for ${}_0 F_0^{(\alpha)}$ in (\ref{eq:Def}), which was expressed as a $r$-dimensional contour integral involving a ${}_0 F_0^{(\alpha)}$ function of only $r \times r$ matrix arguments (rather than $n \times n$ matrix arguments).  This is critical for facilitating the large dimensional analysis of spiked random matrix models, under the asymptotics $n \to \infty$ with $r$ fixed, since the dimension of the matrix arguments does not explode under such asymptotics.  Effectively, the result in \cite{Onatski3} is a generalization of the previous contour-integral formulations in \cite{Forrester,Mo,Wang,Onatski}, which applied for the case $r = 1$. We also mention that, for the same case $r = 1$, an additional generalization of \cite{Forrester,Mo,Wang,Onatski} was presented very recently in \cite{Dharmawansa2}, which provided an analogous contour-integral formula for the case of ${}_p F_q^{(\alpha)}$ with general $p$ and $q$.  


In this paper, to facilitate analysis of LSS of multi-spiked random matrix models, we provide a necessary generalization of the result in \cite{Onatski3} beyond the case $p=0$, $q=0$, by deriving a new  $r$-dimensional contour-integral representation for the hypergeometric function (\ref{eq:Def}), involving a reduced complexity hypergeometric function with $r \times r$ matrix arguments.  This result applies for arbitrary $r$, arbitrary $p$ and $q$, and under some mild conditions on $\alpha$. We keep this discussion general and self-contained, since the class of hypergeometric functions embodied by (\ref{eq:Def}) is very broad, and thus our results may be of independent interest. For the particularization to $\alpha = 1$, which is of prime interest for our LSS analysis, we also derive a new convenient determinant representation, which is general, and allows for arbitrary eigenvalue multiplicities of the rank-$r$ matrix argument.  

Armed with these new results, we can immediately write down new contour-integral-based expressions for the joint eigenvalue densities for each of the three spiked matrix models, with arbitrary numbers of spikes.   Such expressions are in a form which facilitates the analysis of LSS by employing the Coulomb fluid framework from \cite{Chen}, and saddlepoint techniques.  In particular, for each of the three matrix models, we derive CLTs for general linear statistics, presenting rather simple formulas for the asymptotic mean and variance. For the case of Johnstone's spiked model, our expressions provide a simple alternative to those derived previously (using very different methods) in \cite{Wang3}, whilst for the non-central Wishart and $F$ spiked models, the results are completely new.  We find that in all cases, the presence of multiple spikes does not influence the leading order behavior of the asymptotic mean and variance of the LSS, whilst each spike contributes additively to an $O(1)$ correction term to the asymptotic mean, which we characterize explicitly.


{\em Notation.} All columns vectors and matrices are denoted by lowercase and uppercase boldface characters respectively. The conjugate transpose of a matrix $\mathbf{A}$ is $\mathbf{A}^\dag$ whereas its transpose is $\mathbf{A}^T$. $\mathbf{I}_n$ is the identity matrix of size $n \times n$,  whereas $\mathbf{0}_{n\times m}$ is the $n \times m$ matrix of all zeros. $\E(X)$ denotes the expectation of the random variable $X$. $\C\mathcal{W}_n \left (m,\mathbf{\Sigma},\mathbf{\Theta}\right)$ denotes the complex Wishart distribution of size $n$ with $m$ degrees of freedom, scale matrix $\mathbf{\Sigma}$ and non-centrality matrix $\mathbf{\Theta}$. $\mathcal{N}(\mu,\sigma^2)$ denotes the Gaussian distribution with mean $\mu$ and variance $\sigma^2$, whereas $\C\mathcal{N}(\mathbf{u},\mathbf{\Sigma})$ denotes the circularly-symmetric complex Gaussian distribution with mean $\mathbf{u}$ and covariance matrix $\mathbf{\Sigma}$. We use $\overset{\mathcal{L}}{\rightarrow}$ to denote convergence in distribution, and $\mathcal{P}$ to denote Cauchy principal value when dealing with principal value integrals. $a_{1:p}$ denotes a sequence of numbers $a_1, \ldots, a_p$, and similarly,  $a_{1:p} + K$ denotes the sequence $a_1 + K, \ldots, a_p + K$ for any constant $K$. We will also employ $\iota = \sqrt{-1}$.

\section{General Contour-Integral Representation for Hypergeometric Functions of Two Matrix Arguments}



Consider the hypergeometric function of two matrix arguments given in (\ref{eq:Def}),  where $p$ and $q$ are non-negative integers, and $\alpha$ is a real parameter.  Further, denote $x_1, \ldots, x_n$ and $y_1, \ldots, y_n$  as the eigenvalues of the $n \times n$ Hermitian matrices $\mathbf{X}$ and $\mathbf{Y}$, respectively.   Before presenting our main results, we start by recalling some brief background.  First,  recall the classical series expansion
\begin{align}  \label{eq:pFqSeries}
{}_p F_q^{(\alpha)} \left( a_{1:p}; b_{1:q}; \mathbf{X}, \mathbf{Y}   \right) = \sum_{k=0}^\infty \frac{1}{k!} \sum_{\kappa \vdash k, \, \ell (\kappa) \leq n}  \frac{ (a_1)_\kappa^{(\alpha)} \cdots (a_p)_\kappa^{(\alpha)}}{ (b_1)_\kappa^{(\alpha)} \cdots (b_q)_\kappa^{(\alpha)}}  \frac{C_\kappa^{(\alpha)}(\mathbf{X})  C_\kappa^{(\alpha)}(\mathbf{Y}) }{ C_\kappa^{(\alpha)}(\mathbf{I}_n) }
\end{align}
where for all $1\le j \le n$ and $\ell \in \mathbb{Z}^+$ no element of $b_{1:q}$ equals $-\ell+(j-1)/\alpha$, whilst $C_\kappa^{(\alpha)}(\mathbf{X}) = C_\kappa^{(\alpha)}(x_1, \ldots, x_n)$,  $C_\kappa^{(\alpha)}(\mathbf{Y}) = C_\kappa^{(\alpha)}(y_1, \ldots, y_n)$, and $C_\kappa^{(\alpha)}(\mathbf{I}_n) = C_\kappa^{(\alpha)}(1, \ldots, 1)$ are ``$C$-normalized'' Jack polynomials \cite{MacDonald}, which are symmetric homogenous multi-variate polynomials normalized to satisfy:
\begin{align} \label{eq:ZonalDef}
(x_1 + \cdots + x_n)^k = \sum_{\kappa \vdash k, \,  \ell (\kappa) \leq n } C_\kappa^{(\alpha)}(x_1, \ldots, x_n) \; .
\end{align}
The notation $\kappa \vdash k$ means that $\kappa = (\kappa_1, \kappa_2, \ldots)$ is a partition of $k$, with $\ell(\kappa)$ the number of non-zero elements, and the integers $\kappa_1 \geq \kappa_2 \geq \cdots \geq \kappa_{\ell (\kappa)} >  0$ satisfy $\kappa_1 + \kappa_2 + \cdots + \kappa_{\ell(\kappa)} = | \kappa| =  k$. Moreover, $(\cdot)_{\kappa}^{(\alpha)}$ refers to the generalized Pochhammer symbol (see \cite{Dumitriu2007})
\begin{align*} 
(a)_{\kappa}^{(\alpha)} = \prod_{i=1}^{\ell(\kappa)} \frac{ \Gamma \left(   a - \frac{i-1}{\alpha} + \kappa_i  \right) }{  \Gamma \left(  a - \frac{i-1}{\alpha}  \right) }   \; .
\end{align*}
The series \eqref{eq:pFqSeries} converges absolutely for all Hermitian matrices $\mathbf{X}$ and $\mathbf{Y}$ if $p\le q$; for Hermitian matrices $\mathbf{X}$ and $\mathbf{Y}$ satisfying $\max_{ i \le n} |x_i|\times \max_{i \le n} |y_i|<1$ if $p=q+1$, and diverges unless it terminates if $p>q+1$ (see \cite{Gross89}). For all results in this paper, whilst not continually stated,  we  will assume that $\mathbf{X}$ and $\mathbf{Y}$ are chosen such that the series \eqref{eq:pFqSeries} converges.

\begin{remark} Note that since ${}_p F_q^{(\alpha)} \left( a_{1:p}; b_{1:q}; \mathbf{X}, \mathbf{Y}   \right)$ depends on $\mathbf{X}$ and $\mathbf{Y}$ only through their eigenvalues, henceforth we may simply consider $\mathbf{X} = {\rm diag}(x_1, \ldots, x_n)$ and $\mathbf{Y} = {\rm diag}(y_1, \ldots, y_n)$, without loss of generality.  Moreover, as common in the multivariate analysis literature, we have given the definitions above in terms of Hermitian matrices, such that $x_1, \ldots, x_n$ and $y_1, \ldots, y_n$ are real; however, as indicated in \cite{Onatski3}, the definitions above may be equally extended to allow these values to be complex.  
\end{remark}

Note also that for the case where either $p = 0$ or $q = 0$, the corresponding generalized Pochhammer factors on the numerator and denominator of (\ref{eq:pFqSeries}) are omitted; e.g.,
\begin{align*}
{}_0 F_0^{(\alpha)} \left( \mathbf{X}, \mathbf{Y}   \right) = \sum_{k=0}^\infty \frac{1}{k!} \sum_{\kappa \vdash k, \, \ell (\kappa) \leq n }  \frac{C_\kappa^{(\alpha)}(\mathbf{X})  C_\kappa^{(\alpha)}(\mathbf{Y}) }{ C_\kappa^{(\alpha)}(\mathbf{I}_n) } \; .
\end{align*}
Finally, from the definition (\ref{eq:pFqSeries}) and also (\ref{eq:ZonalDef}), for the trivial case $n=1$, 
\begin{align} \label{eq:pFqscalar}
{}_p F_q^{(\alpha)} \left( a_{1:p}; b_{1:q}; x, y   \right) &= \sum_{k=0}^\infty \frac{1}{k!} \frac{ \prod_{j=1}^p   \Gamma(a_j+k) / \Gamma(a_j)}  { \prod_{j=1}^q  \Gamma(b_j+k) / \Gamma(b_j)}  ( x y )^k \nonumber \\
&= {}_p F_q \left( a_{1:p} ; b_{1:q}; x  y   \right) \; ,
\end{align}
which is just the classical ${}_pF_q$ function of one variable\footnote{Scalar hypergeometric functions do not depend on $\alpha$; thus, we will drop the superscript when referring to them.}.

\subsection{New Results}

In this work we are primarily interested in cases where one of the matrix arguments, say $\mathbf{X}$, has rank $r \leq n$, as this is precisely the scenario which arises when dealing with multi-spiked random matrix models, as will be shown in Section \ref{sec:CLT}.  For such cases, the following result provides a new $r$-dimensional contour-integral formula, representing the ${}_pF_q^{(\alpha)}$ function in (\ref{eq:pFqSeries}) of two $n \times n$ matrix arguments in terms of a reduced complexity ${}_pF_q^{(\alpha)}$ function with $r \times r$ matrix arguments.  

\begin{lemma} \label{le:Main}
Assume that $\mathbf{X} = {\rm diag}(x_1, \ldots, x_n)$ and  $\mathbf{Y} = {\rm diag}(y_1, \ldots, y_n)$, with real or complex diagonal entries. Assume also that $x_j \neq 0$ for $1 \leq j \leq r$ and  $x_j = 0$ for $r < j \leq n$.  Denote $\mathbfcal{X} = {\rm diag}\left( x_1, \ldots, x_r  \right)$, and define 
$\mathbfcal{Z} = {\rm diag} \left( z_1, \ldots, z_r \right)$ with $z_j \in \mathbb{C}$.  In addition, let $\alpha = 2/\beta$, with $\beta \in \mathbb{Z}^+$. If $\beta$ is odd, then further assume that $n-r+1$ is even (otherwise, this assumption is not necessary). Define $\theta =  (n-r + 1 - \alpha)/ \alpha$, and assume that  $b_{1:q}$ and $b_{1:q}-\theta$ contain no elements equal to $-\ell+(j-1)/\alpha$ for all $1\le j \le n$ and $\ell \in \mathbb{Z}^+$. Then, we  have 
\begin{align} \label{eq:pFqResult}
& {}_p F_q^{(\alpha)} \left( a_{1:p}; b_{1:q}; \mathbf{X}, \mathbf{Y}   \right) \nonumber \\
& \hspace*{0.3cm} = \frac{\phi^{(\alpha)}(b_{1:q})}{\phi^{(\alpha)}(a_{1:p})}  \frac{1}{r! (2 \pi \iota)^r } \oint_{{\rm C}} \cdots \oint_{{\rm C}}  {}_p F_q^{(\alpha)} \left( a_{1:p} - \theta ; b_{1:q}-\theta ; \mathbfcal{X}, \mathbfcal{Z}  \right) \omega^{(\alpha)}  \left( \mathbfcal{X}, \mathbf{Y}, \mathbfcal{Z} \right) \prod_{j=1}^r {\rm d} z_j
\end{align}
where ${\rm C}$ is a contour, oriented counter-clockwise, that encloses $y_1, \ldots, y_n$. Moreover, 
\begin{align}
\omega^{(\alpha)}  \left( \mathbfcal{X}, \mathbf{Y}, \mathbfcal{Z} \right) &= (-1)^{r(r-1)/(2 \alpha)} \prod_{j=1}^r \left[  \frac{ \Gamma((n+1-j)/ \alpha) \Gamma(1/ \alpha)}{ \Gamma((r+1-j)/ \alpha)  } \right] \nonumber \\
& \hspace*{1cm} \times \prod_{j > i}^r ( z_j - z_i )^{2/\alpha} \prod_{j=1}^r \left[  x_j^{-\theta} \prod_{s=1}^n (z_j - y_s )^{-1/\alpha}  \right] \nonumber
\end{align}
and
\begin{align} \label{eq:phi}
\phi^{(\alpha)} (a_{1:p})  =  \prod_{j=1}^p \left[ \prod_{i=1}^r  \frac{ \Gamma \left( a_j - \frac{i-1}{\alpha} \right)  }{ \Gamma \left(  a_j - \theta - \frac{i-1}{\alpha} \right)   }  \right] \; . 
\end{align}
\end{lemma}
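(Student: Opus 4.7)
My plan is to reduce the general $(p,q)$ case to the $p = q = 0$ result of \cite{Onatski3}. Under the hypotheses of the lemma, $\theta = (n - r + 1 - \alpha)/\alpha$ is a non-negative integer, which is essential for the Jack polynomial stabilization step and guarantees that the $\prod_{j=1}^r x_j^{-\theta}$ factor appearing in $\omega^{(\alpha)}$ can be absorbed. Since $\mathbf{X}$ has rank $r$, the series \eqref{eq:pFqSeries} is supported on partitions $\kappa$ with $\ell(\kappa) \leq r$, and for such $\kappa$ one has $C_\kappa^{(\alpha)}(\mathbf{X}) = C_\kappa^{(\alpha)}(\mathbfcal{X})$. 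The strategy is then to expand both sides of the claimed identity as Jack polynomial series in $\mathbfcal{X}$ and match coefficients.

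The first computational step is to expand ${}_pF_q^{(\alpha)}(a_{1:p} - \theta; b_{1:q} - \theta; \mathbfcal{X}, \mathbfcal{Z})$ inside the contour integral via \eqref{eq:pFqSeries}, interchange summation and integration (justified by absolute convergence on $\mathrm{C}$), and isolate the $\mathbfcal{X}$-dependence as $\prod_j x_j^{-\theta} \cdot \sum_\kappa (\cdots)\, C_\kappa^{(\alpha)}(\mathbfcal{X})\, I_\kappa(\mathbf{Y})$, where $I_\kappa(\mathbf{Y})$ denotes the $\mathbfcal{X}$-independent integral of $C_\kappa^{(\alpha)}(\mathbfcal{Z})$ against the remaining factors of $\omega^{(\alpha)}$. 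Next, using the stabilization identity $C_\kappa^{(\alpha)}(\mathbfcal{X}) = \eta_\kappa (x_1 \cdots x_r)^\theta C_{\kappa - (\theta)^r}^{(\alpha)}(\mathbfcal{X})$ (valid when $\ell(\kappa) = r$ and $\kappa_r \geq \theta$), the change of variable $\kappa = \tilde\kappa + (\theta)^r$ absorbs the $\prod x_j^{-\theta}$ factor; for partitions $\kappa$ not of this form, $I_\kappa(\mathbf{Y})$ must vanish, which I would verify by a residue count (it is also forced by consistency with the polynomial nature of the left-hand side). Matching the coefficients of $C_{\tilde\kappa}^{(\alpha)}(\mathbfcal{X})$ then reduces the identity to the partition-level Kaneko-type formula
\[ \frac{C_{\tilde\kappa}^{(\alpha)}(\mathbf{Y})}{|\tilde\kappa|!\, C_{\tilde\kappa}^{(\alpha)}(\mathbf{I}_n)} = (\text{explicit constant})\cdot \frac{I_{\tilde\kappa + (\theta)^r}(\mathbf{Y})}{(|\tilde\kappa| + r\theta)!\, C_{\tilde\kappa + (\theta)^r}^{(\alpha)}(\mathbf{I}_r)}, \]
which is precisely what \cite{Onatski3} establishes.

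To handle the Pochhammer factors, I would verify the key shift identity
\[ \frac{(a)_{\tilde\kappa}^{(\alpha)}}{(a - \theta)_{\tilde\kappa + (\theta)^r}^{(\alpha)}} = \prod_{i=1}^r \frac{\Gamma(a - \theta - (i-1)/\alpha)}{\Gamma(a - (i-1)/\alpha)} \]
by a direct telescoping of the Gamma-function definition of the generalized Pochhammer symbol, noting that for indices $i > \ell(\tilde\kappa)$ the $\tilde\kappa$-dependent factors cancel exactly. The crucial feature is that the right-hand side is \emph{independent} of $\tilde\kappa$; applying this identity to each $a_j$ and each $b_j$ produces precisely the $\kappa$-independent prefactor $\phi^{(\alpha)}(b_{1:q})/\phi^{(\alpha)}(a_{1:p})$ stated in the lemma. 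The main technical obstacle is tracking the $|\kappa|$-dependent normalization constants $\eta_\kappa$ through the stabilization step and ensuring they recombine correctly with $|\kappa|!$ and $C_\kappa^{(\alpha)}(\mathbf{I}_r)$; the sum-integral interchange is routine given the absolute convergence conditions stated after \eqref{eq:pFqSeries}.
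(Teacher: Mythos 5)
Your proposal is, in substance, the same argument as the paper's: both rest on (i) the shift property of Jack polynomials under multiplication by $(z_1\cdots z_r)^{\theta}$ (equivalently $(x_1\cdots x_r)^{\theta}$), which is where the integrality of $\theta$ enters, (ii) Onatski's $p=q=0$ computation to supply the partition-by-partition identity, and (iii) the telescoping Gamma-function identity $(a-\theta)^{(\alpha)}_{\tilde{\kappa}}=\phi^{(\alpha)}(a)\,(a)^{(\alpha)}_{\kappa}$, whose $\tilde{\kappa}$-independence is exactly what produces the prefactor $\phi^{(\alpha)}(b_{1:q})/\phi^{(\alpha)}(a_{1:p})$. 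The packaging differs. The paper re-runs Onatski's derivation with the Pochhammer factors carried along: it rescales so the contours become the unit torus, expands $\prod_{j,s}(1-y_s z_j^{*})$ in Jack polynomials of $\mathbfcal{Z}^{*}$, and invokes the torus scalar product, under which $\langle C_{\kappa},(\prod_j z_j^{\theta})C_{\tau}\rangle$ vanishes unless $\kappa=\tau+(\theta)^r$; the vanishing of your $I_{\kappa}(\mathbf{Y})$ for non-shiftable $\kappa$ is precisely this orthogonality statement, so no separate residue count is needed --- and in your framing it also comes for free from the linear independence of $\{(x_1\cdots x_r)^{-\theta}C_{\kappa}^{(\alpha)}(\mathbfcal{X})\}$ once you match against the left-hand side. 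What your black-box route buys is economy: beyond citing Onatski, only the Pochhammer shift must be checked. What it costs is that extracting the partition-level identity from Onatski's theorem still requires his sum--integral interchange, so the analytic work is relocated rather than avoided. Your flagged ``main technical obstacle'' --- tracking the stabilization constants $\eta_{\kappa}$ --- is real but benign: the same constant ($K_{\tau}(\alpha)^{-1}$ in the paper's notation) appears once from shifting $C_{\tau}(\mathbfcal{Z})$ and once, inverted, from de-shifting $C_{\tilde{\tau}}(\mathbfcal{X})$, and the paper verifies that everything collapses to $\tilde{\gamma}^{(\alpha)}=1$ (correcting, in passing, Onatski's assumption that this constant is unity from the outset).
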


\begin{proof}
See Section \ref{sec:LemProof}.
\end{proof}

\begin{remark}
Note that if $p=0$ or $q=0$, then the corresponding empty products in (\ref{eq:phi}) (equivalently, empty arguments of $\phi^{(\alpha)}$ in (\ref{eq:pFqResult})) are interpreted as unity.  The same is true more generally whenever such empty products are encountered throughout the paper.
\end{remark}

\begin{remark}
The reduction to the case of a single matrix argument; i.e., ${}_p F_q^{(\alpha)} \left( a_{1:p}; b_{1:q}; \mathbf{X}, \mathbf{I}_n \right) = {}_p F_q^{(\alpha)} \left( a_{1:p}; b_{1:q}; \mathbf{X} \right) $ is immediate, upon setting $y_i = 1$ for $i = 1, \ldots, n$.  
\end{remark}

For $p=0$, $q = 0$, Lemma \ref{le:Main} reduces to the result in \cite[Eq.\ (1)]{Onatski3}.   In addition, for the special case $r = 1$, noting (\ref{eq:pFqscalar}), it collapses to the following:
\begin{align*} 
& {}_p F_q^{(\alpha)} \left( a_{1:p}; b_{1:q}; \mathbf{X}, \mathbf{Y}   \right)  \\
& \hspace*{1cm} = \frac{\phi^{(\alpha)}(b_{1:q})}{\phi^{(\alpha)}(a_{1:p})} \frac{1}{ 2 \pi \iota } \oint_{{\rm C}}  {}_p F_q \left( a_{1:p} - n/\alpha + 1; b_{1:q}- n/\alpha + 1; x_1   z \right) \omega^{(\alpha)}  \left( x_1, \mathbf{Y}, z \right)  {\rm d} z
\end{align*}
where now
\begin{align*}
\omega^{(\alpha)}  \left( x_1 , \mathbf{Y}, z \right) = \frac{ \Gamma(n / \alpha)}{    x_1^{n / \alpha - 1} }  \prod_{s=1}^n (z - y_s )^{-1/\alpha}    \; 
\end{align*}
and
\begin{align}
\phi^{(\alpha)} (a_{1:p}) =  \prod_{j=1}^p \frac{ \Gamma(a_j) }{ \Gamma( a_j - n / \alpha + 1) } \; .
\end{align}
This result has been reported very recently in \cite[Proposition 1]{Dharmawansa2}.

\subsubsection{Case of $\alpha = 1$ ($\beta = 2$)}

Whilst the result in Lemma \ref{le:Main} is very general, and we believe may be of independent interest, our main subsequent focus is on the case $\alpha = 1$ ($\beta = 2$), which arises in the study of Hermitian random matrix ensembles.   In this case, we may invoke a determinant formula for the hypergeometric function of two matrix arguments, thus giving a simpler and more convenient representation. These results allow for the $x_i$s to occur with arbitrary multiplicities.


\begin{corollary} \label{corr:Multi}
Consider the same Assumptions as in Lemma \ref{le:Main}, but now also consider $\alpha = 1$ and 
\begin{align}
& x_1 = \cdots = x_{k_1}  \; \;     & =: \tilde{x}_1  \nonumber \\
& x_{k_1 + 1} = \cdots = x_{k_1 + k_2}     & =: \tilde{x}_2  \nonumber \\
& \quad \vdots   &  \vdots \quad \nonumber \\
& x_{ \sum_{\ell=1}^{M-1} k_\ell + 1} = \cdots = x_{r}   & =: \tilde{x}_M \nonumber 
\end{align}
with $\tilde{x}_1 > \cdots > \tilde{x}_M > 0$, where the ordering is imposed without loss of generality.  Then, we have
\begin{align*}
& {}_p F_q^{(1)} \left( a_{1:p}; b_{1:q}; \mathbf{X}, \mathbf{Y}   \right)  =  \frac{K( a_{1:p}, b_{1:q})}{\prod_{ i < j}^M (\tilde{x}_i - \tilde{x}_j )}  \prod_{\ell = 1}^M \frac{ K_{k_\ell} (a_{1:p}, b_{1:q} ) }{ \tilde{x}_\ell^{k_\ell (n-r) }}  {\rm det} \left( \mathbf{A} \right)
\end{align*}
where 
\begin{align} \label{eq:KDef}
K( a_{1:p}, b_{1:q}) = \prod_{\ell = 1}^r  \left[ (n-\ell)!  \prod_{i=1}^q   \frac{ (b_i-\ell)!}{(b_i-n)!}   \prod_{j=1}^p   \frac{ (a_j-n)!}{(a_j-\ell)!}  \right] \; ,
\end{align}
whilst $K_{k_\ell} ( a_{1:p}, b_{1:q}) = 1$ for $k_\ell = 1$ and 
\begin{align*}
K_{k_\ell} ( a_{1:p}, b_{1:q}) =  \prod_{j=1}^{k_\ell - 1} \left[ \frac{1}{(k_\ell - j)!} \frac{ \prod_{i=1}^p (a_i - n + j)! / (a_i - n)! }{ \prod_{i=1}^q (b_i - n + j)! / (b_i - n)! }  \right] \; 
\end{align*}
for $k_\ell > 1$. 
Moreover, $\mathbf{A} = \left[\mathbf{A}_1^T,\dots,\mathbf{A}_M^T\right]^T$ is an $r \times r$ matrix,  
with $k_\ell \times r$ matrix $\mathbf{A}_\ell$  taking entries
\begin{align*}
( \mathbf{A}_\ell )_{i, j} = \frac{1}{2 \pi \iota } \oint_{{\rm C}} \frac{ {}_p F_q \left( a_{1:p} - n+1 + k_\ell - i ; b_{1:q}-n + 1 + k_\ell - i ; x_\ell z   \right) z^{k_\ell - i + j - 1}}{ \prod_{s=1}^n \left( z - y_s \right) } {\rm d} z  \; \; .
\end{align*}
\end{corollary}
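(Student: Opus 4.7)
I would specialize Lemma~\ref{le:Main} to $\alpha=1$ (so that $\theta = n-r$) and collapse the resulting $r$-fold contour integral into a single $r\times r$ determinant by combining the classical Khatri--Gross determinantal identity for ${}_pF_q^{(1)}$ of two matrix arguments with the Vandermonde factor already present in $\omega^{(1)}$. The general case with repeated $x_i$'s would then be obtained as a confluent limit of the distinct-eigenvalue case.

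\textbf{Distinct-eigenvalue case.} For distinct $x_i$'s the starting point is
\begin{align*}
{}_pF_q^{(1)}(a_{1:p}; b_{1:q}; \mathbf{X},\mathbf{Y}) = \frac{\phi^{(1)}(b_{1:q})}{\phi^{(1)}(a_{1:p})}\cdot\frac{1}{r!(2\pi\iota)^r}\oint_C\!\!\cdots\!\!\oint_C {}_pF_q^{(1)}(a_{1:p}-\theta; b_{1:q}-\theta; \mathbfcal{X},\mathbfcal{Z})\,\omega^{(1)}(\mathbfcal{X},\mathbf{Y},\mathbfcal{Z})\prod_{j=1}^r\mathrm{d}z_j.
\end{align*}
The Khatri--Gross identity expresses the matrix-argument hypergeometric in the integrand as
\begin{align*}
{}_pF_q^{(1)}(a_{1:p}-\theta; b_{1:q}-\theta; \mathbfcal{X},\mathbfcal{Z}) = \frac{C_r(a,b)}{\Delta(\mathbfcal{X})\,\Delta(\mathbfcal{Z})}\det\!\left[{}_pF_q(a_{1:p}-n+1; b_{1:q}-n+1; x_i z_j)\right]_{i,j=1}^r,
\end{align*}
with $C_r(a,b)$ an explicit gamma-function prefactor; the parameter shift $a-\theta-(r-1)=a-n+1$ is the standard reduction in Khatri--Gross for $r\times r$ arguments. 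Since $\omega^{(1)}$ contains $\prod_{i<j}(z_j-z_i)^2=\Delta(\mathbfcal{Z})^2$, one copy of $\Delta(\mathbfcal{Z})$ cancels and the other combines with the determinant so that Andreief's identity converts the $r$-fold contour integral into a single determinant of scalar contour integrals, absorbing the $1/r!$ and producing entries of the form $\frac{1}{2\pi\iota}\oint_C \frac{{}_pF_q(a-n+1;b-n+1;x_i z)\,z^{j-1}}{\prod_s(z-y_s)}\mathrm{d}z$.

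\textbf{Confluent limit.} To reintroduce multiplicities, I would let the $x_i$'s coalesce into blocks of sizes $k_1,\dots,k_M$ at limits $\tilde{x}_1,\dots,\tilde{x}_M$. The denominator $\Delta(\mathbfcal{X})$ factorises as within-block Vandermondes (which vanish) times cross-block products tending to $\prod_{\ell<m}(\tilde{x}_\ell-\tilde{x}_m)^{k_\ell k_m}$, while the contour-integral determinant develops a compensating zero of the same total order. The Hermite divided-difference identity
\begin{align*}
\lim_{x_1,\dots,x_k\to\tilde{x}}\frac{\det[g(x_i,\cdot)]}{\prod_{1\le i<j\le k}(x_j-x_i)} = \det\!\left[\frac{1}{(i-1)!}\partial_x^{i-1}g(x,\cdot)\Big|_{x=\tilde{x}}\right]
\end{align*}
applied blockwise cancels the within-block singularities and replaces each coalescing block of rows by successive $x$-derivatives of the original row, evaluated at $\tilde{x}_\ell$. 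Using $\partial_x^m\,{}_pF_q(a;b;xz)=z^m\frac{\prod_k(a_k)_m}{\prod_k(b_k)_m}\,{}_pF_q(a+m;b+m;xz)$, these derivatives are identified with the shifted scalar hypergeometric functions appearing in $\mathbf{A}_\ell$ (the $z^m$ factor combining with $z^{j-1}$ to give $z^{m+j-1}$), and extracting the Pochhammer prefactors row-by-row reproduces $K_{k_\ell}(a,b)$ after a row reversal within each block that relabels $m=k_\ell-i$. Simultaneously, $\prod_j x_j^{-\theta}$ in $\omega^{(1)}$ becomes $\prod_\ell\tilde{x}_\ell^{-k_\ell(n-r)}$.

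\textbf{Main obstacle.} The principal technical challenge is the gamma-function and factorial bookkeeping through this chain: combining $\phi^{(1)}$, the gamma content of $\omega^{(1)}$, the Khatri--Gross constant $C_r(a,b)$, the $1/(i-1)!$ terms from divided differences, and the Pochhammer coefficients extracted from the $x$-derivatives, and verifying that they simplify to exactly $K(a,b)$ from \eqref{eq:KDef} and $\prod_\ell K_{k_\ell}(a,b)$. A closely related subtlety is checking that the cross-block Vandermonde power $\prod_{\ell<m}(\tilde{x}_\ell-\tilde{x}_m)^{k_\ell k_m}$ inherited from $\Delta(\mathbfcal{X})$ reduces to the first-power form $\prod_{i<j}^M(\tilde{x}_i-\tilde{x}_j)$ stated in the corollary, with the residual polynomial factors absorbed automatically into $\det(\mathbf{A})$ via the intrinsic $\tilde{x}_\ell$-dependence of its blocks.
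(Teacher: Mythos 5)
Your proposal is correct in outline but takes a genuinely different route from the paper. The paper does not prove the distinct-eigenvalue case first and then coalesce: it works directly with multiplicities by invoking, inside the contour integral of Lemma~\ref{le:Main} specialized to $\alpha=1$, a determinantal identity for ${}_pF_q^{(1)}$ of $r\times r$ matrix arguments (\cite[Lemma~5]{Chiani2}) that \emph{already} allows arbitrary eigenvalue multiplicities of $\mathbfcal{X}$; this produces the block matrix with entries $z_j^{k_\ell-i}\,{}_pF_q(a_{1:p}-n+k_\ell-i+1;\,b_{1:q}-n+k_\ell-i+1;\,\tilde{x}_\ell z_j)$ at the outset, after which one Vandermonde $\prod_{i<j}(z_j-z_i)=\det(z_j^{i-1})$ from $\omega^{(1)}$ pairs with that determinant under the Andreief identity \eqref{eq:Andr}, exactly as in your distinct-eigenvalue step. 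You instead establish Corollary~\ref{corr:Simple} first (same two ingredients: the distinct-case Khatri--Gross identity plus Andreief) and then obtain Corollary~\ref{corr:Multi} as a confluent limit of the final determinant of contour integrals. Your identification of the $x$-derivatives with the parameter-shifted scalar ${}_pF_q$'s, and of the extracted factorial/Pochhammer products with $K_{k_\ell}(a_{1:p},b_{1:q})$, is correct (modulo the per-block row-reversal sign, which you should track alongside the $(-1)^{r(r-1)/2}$ already present in $\omega^{(1)}$). The paper's route outsources all of the confluence to the cited lemma; yours is self-contained given only the distinct-eigenvalue identity, at the cost of the limit argument and the extra bookkeeping you anticipate.

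One substantive caveat on the point you flag at the end: the confluent limit of $\prod_{i<j}^r(x_i-x_j)$ genuinely leaves the cross-block factor $\prod_{\ell<m}(\tilde{x}_\ell-\tilde{x}_m)^{k_\ell k_m}$ in the denominator, and this \emph{cannot} be ``absorbed automatically into $\det(\mathbf{A})$'': each block $\mathbf{A}_\ell$ depends only on the single point $\tilde{x}_\ell$, and $\det(\mathbf{A})$ is completely pinned down by the corollary's definition, so there is no freedom to reallocate polynomial factors between it and the prefactor. Your method therefore lands on the standard confluent form with exponent $k_i k_j$, which agrees with the first-power form in the statement only when all $k_\ell=1$. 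This is a tension with the statement as printed (and with the way \cite[Lemma~5]{Chiani2} is transcribed in the paper's proof), not a defect of your strategy; but the resolution is to accept the exponent $k_ik_j$ that your limit produces, not to hope the discrepancy disappears into the determinant.
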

\begin{proof}
See Section \ref{sec:Corr1Proof}.
\end{proof}

\begin{corollary}\label{corr:Simple}
If $x_1 > \cdots > x_r$ (i.e., no eigenvalue multiplicities),  then Corollary \ref{corr:Multi} reduces to
\begin{align*} 
& {}_p F_q^{(1)} \left( a_{1:p}; b_{1:q}; \mathbf{X}, \mathbf{Y}   \right)  \\
& \hspace*{0.5cm} =  \frac{K( a_{1:p}, b_{1:q})}{\prod_{ i < j}^r (x_i - x_j ) \prod_{j=1}^r x_j^{n-r}}    {\rm det} \left( \frac{1}{2 \pi \iota } \oint_{{\rm C}} \frac{ {}_p F_q \left( a_{1:p} - n+1 ; b_{1:q}-n + 1 ; x_i z   \right) z^{j-1}}{ \prod_{s=1}^n \left( z - y_s \right) } {\rm d} z \right)_{i,j=1}^r  
\end{align*}
where $K( a_{1:p}, b_{1:q})$ is given by (\ref{eq:KDef}).
\end{corollary}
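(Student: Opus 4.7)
The plan is to deduce Corollary \ref{corr:Simple} as a direct specialization of Corollary \ref{corr:Multi}, by setting the multiplicity pattern to $k_1 = k_2 = \cdots = k_r = 1$ (so that $M = r$ and $\tilde{x}_\ell = x_\ell$ for $\ell = 1, \ldots, r$). The strategy is simply to substitute these values into every factor appearing in the statement of Corollary \ref{corr:Multi} and observe that the resulting expression collapses to the one claimed.

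First, I would inspect the prefactors. The definition of $K_{k_\ell}(a_{1:p}, b_{1:q})$ gives $K_{k_\ell} = 1$ whenever $k_\ell = 1$, so the entire product $\prod_{\ell=1}^M K_{k_\ell}(a_{1:p}, b_{1:q})$ drops out. The factor $K(a_{1:p}, b_{1:q})$ is unchanged, since it depends only on $r$, $n$, $a_{1:p}$, $b_{1:q}$. Next, the Vandermonde-type factor becomes $\prod_{i<j}^M (\tilde{x}_i - \tilde{x}_j) = \prod_{i<j}^r (x_i - x_j)$, and the exponential product becomes $\prod_{\ell=1}^M \tilde{x}_\ell^{k_\ell(n-r)} = \prod_{j=1}^r x_j^{n-r}$. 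Combining these yields precisely the scalar prefactor in Corollary \ref{corr:Simple}.

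It remains to handle the determinant. Under $k_\ell = 1$, each block $\mathbf{A}_\ell$ is a $1 \times r$ matrix (a single row indexed by $i = 1$), and stacking these $M = r$ rows gives an $r \times r$ matrix $\mathbf{A}$. Setting $k_\ell = 1$ and $i = 1$ in the entry formula simplifies the shift $k_\ell - i = 0$, so $(\mathbf{A}_\ell)_{1, j}$ reduces to
\[
\frac{1}{2\pi \iota} \oint_{\rm C} \frac{{}_p F_q\left( a_{1:p} - n + 1; b_{1:q} - n + 1; x_\ell z \right) z^{j-1}}{\prod_{s=1}^n (z - y_s)}\, {\rm d}z,
\]
which is exactly the $(\ell, j)$ entry of the determinant in Corollary \ref{corr:Simple}.

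There is no real obstacle here: the result follows by bookkeeping, tracking how the indices degenerate when every block has size one. The only point that warrants a brief check is that the ordering of the rows in $\mathbf{A} = [\mathbf{A}_1^T, \ldots, \mathbf{A}_M^T]^T$ matches the row-indexing used in the single-block determinant on the right-hand side; this is immediate because the $\tilde{x}_\ell$ were declared in decreasing order, and both sides list them in the same order.
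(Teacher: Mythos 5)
Your proof is correct and follows exactly the route the paper intends: Corollary \ref{corr:Simple} is the immediate specialization of Corollary \ref{corr:Multi} to $M=r$, $k_\ell=1$, and your bookkeeping of each factor ($K_{k_\ell}=1$, the Vandermonde and power products, and the collapse of each $1\times r$ block to a single row with $k_\ell-i=0$) is accurate. The paper offers no separate proof for this corollary precisely because the substitution is routine, so your argument matches the intended derivation.
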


For the case, $p = 0$, $q = 0$, this agrees with \cite[Corollary 1]{Onatski3}.   Moreover, for the case $r = 1$,  it collapses to
\begin{align} 
& {}_p F_q^{(1)} \left( a_{1:p}; b_{1:q}; \mathbf{X}, \mathbf{Y}   \right)  =  \frac{ K( a_{1:p}, b_{1:q}) }{x_1^{n-1}}    \frac{1}{ 2 \pi \iota } \oint_{{\rm C}} \frac{  {}_p F_q \left( a_{1:p} - n + 1; b_{1:q}- n + 1 ; x_1  z \right)  }{  \prod_{s=1}^n (z - y_s ) }  {\rm d} z \nonumber
\end{align}
where 
\begin{align}
K( a_{1:p}, b_{1:q}) = (n-1)!  \prod_{i=1}^q \frac{ (b_i-1)! }{ (b_i-n)! } \prod_{j=1}^p  \frac{ (a_j-n)! }{ (a_j-1)! }    \;   \nonumber \; .
\end{align}
For $p=0$, $q = 0$, this result agrees with \cite{Forrester,Wang,Mo,Onatski}; for $p=0$, $q = 1$, it agrees with \cite[Eq. (6)]{Dharmawansa}; whilst for $p=1$, $q=1$, it agrees with \cite[Page 18]{Passemier2}.

%

\section{Central Limit Theorems for Linear Spectral Statistics} \label{sec:CLT}

In this section, we derive CLTs for LSS of three ``spiked'' Hermitian random matrix ensembles.  Our results will apply for rather general scenarios, allowing for multiple spikes.  In the following discussion, we will assume that all spikes are \emph{distinct}, which is the most representative scenario for practical applications, such as those highlighted in the introduction (signal processing, biology, finance, etc).  The case where some or all of the spikes coincide will be discussed subsequently, in Section \ref{sec:spikeMult}, for which additional technical difficulties arise.

\subsection{Multi-Spiked Matrix Models and Eigenvalue Distributions}\label{sec:models}
The ``spiked'' complex random matrix models we consider are given as follows:
\begin{list}{$\bullet$}{\leftmargin=2em}
\item {\em Model A:  Spiked central Wishart:} \\ Matrices with distribution $\C\mathcal{W}_n \left (m,\mathbf{\Sigma},\mathbf{0}_{n\times n} \right)$ ($m\ge n$), where $\mathbf{\Sigma}$ has multiple distinct ``spike'' eigenvalues  $1+\delta_1 >  \cdots > 1+\delta_r$, with $\delta_k >0$ for all $1\le k \le r$, and all other eigenvalues equal to $1$. \\ 
\item {\em Model B: Spiked non-central Wishart:}\\
Matrices with distribution $\C\mathcal{W}_n \left (m,\I_n,\mathbf{\Theta} \right)$ ($m\ge n$), where $\mathbf{\Theta}$ is rank $r$ with distinct ``spike'' eigenvalues $n\nu_1 > \cdots > n\nu_r >0$. \\ 
\item {\em Model C: Spiked multivariate F:}\\
Matrices of the form
\[\mathbf{F}=\W_1\W_2^{-1}\text{,}\] where $\W_1 \sim \C\mathcal{W}_n \left (m_1,\mathbf{\Sigma},\mathbf{\Theta}\right)$ ($m_1> n$), $\W_2 \sim \C\mathcal{W}_n \left (m_2,\mathbf{\Sigma},\mathbf{0}_{n\times n} \right)$ ($m_2> n$) are independent, with $\mathbf{\Theta}$ rank $r$ with distinct ``spike'' eigenvalues $n\nu_1 > \cdots > n\nu_r>0$.
\end{list}

For Models A and B, exact expressions for the joint probability density function of the eigenvalues $x_k$, $1\le k \le n$ (taken in the following to be unordered)  are well known, and these are expressed in terms of ${}_0 F_0^{(1)}$ and ${}_0F_1^{(1)}$ functions of two matrix arguments respectively, see \cite[Eq. (95) and (102)]{James}.  Thus, by directly invoking Corollary \ref{corr:Simple}, we immediately obtain new expressions for these eigenvalue distributions, which admit the unified form:
\begin{align}
\frac{ K_{n}^{(A,B)}}{(2 \pi \imath)^r} \prod_{i<j}^n(x_j-x_i)^2 \prod_{j=1}^n  x_j^{m-n}e^{-x_j}     \oint_{\rm C}\cdots\oint_{\rm C} \frac{{\text{det}} \left ( z_j^{i-1}\right )_{i,j=1}^r\text{det}  \left ( l_i(z_j)\right )_{i,j=1}^r}{\prod_{t=1}^r\prod_{s=1}^n (z_t-x_s)}\prod_{j=1}^r\dif z_j \text{,} 
 \label{eq:density}
\end{align}
for $x_k  \in (0, \infty), 1 \leq k \leq n$, the contour ${\rm C}$ encloses counter-clockwise $x_1, \ldots, x_n$ in its interior and $K_{n}^{(A,B)}$ is a normalization constant given explicitly as
\begin{align}
K_{n}^{(A,B)} = \begin{cases}
\frac{\prod_{i < j}^r (\delta_i-\delta_j)^{-1} \prod_{j=1}^r (1+\delta_j)^{r-m-n}}{r!\prod_{k=1}^n (m-k)!\prod_{j=r+1}^n (n-j)!}, &  \text{for Model A} \\
\frac{\prod_{i < j}^r (\nu_i-\nu_j)^{-1} \prod_{j=1}^r \nu_j^{r-n}\exp({-n\sum_{j=1}^r \nu_j})}{r!(m-n)!n^{r(n-r)+1}\prod_{j=r+1}^n (m-j)!(n-j)!},  &  \text{for Model B.}
 \end{cases}\nonumber
\end{align}
The function $l(\cdot)$ captures the effect of the spiked eigenvalues and is given by
\begin{equation}
  l_i(z) = \begin{cases}
    \exp\left ( \frac{\delta_i }{\delta_i+1}z\right), &  \text{for Model A} \\
    ~_0F_1(m-n+1,n \nu_i z),  &  \text{for Model B.}
  \end{cases}\nonumber
\end{equation}


Similarly, for Model C, an exact expression for the joint probability density function of the eigenvalues $x_k$,  $1 \leq k \leq n$ (taken in the following to be unordered) is well known, and this is given in terms of a ${}_1F_1^{(1)}$ functions of two matrix arguments, see \cite[Eq. (109)]{James}. Thus, by invoking Corollary \ref{corr:Simple} and applying a change of variable, we immediately obtain a new expression for the joint density of $\mathsf{f}_k=x_k/(1+x_k) \in (0,1)$:
\begin{align}
&\frac{ K_{n}^{(C)}}{(2 \pi \imath)^r}\prod_{i < j}^n (\mathsf{f}_j-\mathsf{f}_i)^2
 \prod_{j=1}^n \mathsf{f}_j^{m_1-n}(1-\mathsf{f}_j)^{m_2-n} \oint_{\rm C}\cdots\oint_{\rm C} \frac{{\text{det}} \left ( z_j^{i-1}\right )_{i,j=1}^r\text{det}  \left ( l_i(z_j)\right )_{i,j=1}^r }{\prod_{t=1}^r\prod_{s=1}^n (z_t-\mathsf{f}_s)}\prod_{j=1}^r\dif z_j \text{,}\label{eq:densityF}
\end{align}
where the contour ${\rm C}$ encloses counter-clockwise $\mathsf{f}_1, \ldots, \mathsf{f}_n$ in its interior, $K_{n}^{(C)}$ is a normalization constant given explicitly as
\begin{align*}
K_{n}^{(C)}=\frac{\prod_{i < j}^r (\nu_i-\nu_j)^{-1} \prod_{j=1}^r \nu_j^{r-n}\e^{-n\sum_{j=1}^r \nu_j}(m_1+m_2-n)!\prod_{j=r+1}^n (m_1+m_2-j)!}{r!n^{r(n-r)+1}\prod_{j=r+1}^n (n-j)!\prod_{k=1}^n (m_2-k)!(m_1-n)!\prod_{j=r+1}^n (m_1-j)!}\text{,}
\end{align*}
whilst the function $l(\cdot)$ captures the effect of the spiked eigenvalues and is given by
\begin{align*}  l_i(z) =  {}_1 F_1( m_1+m_2-n+1, m_1-n+1; n\nu_i z ) \text{.}
\end{align*} 

In the following we will compute the asymptotic distribution of general LSS for each of the three multi-spiked random matrix models above. In taking asymptotics for Models A and B, we will be concerned with the following limits:
\begin{assumption}\label{assumptionAB}
$m,n \rightarrow \infty$ such that $m/n \rightarrow c \geq 1$.
\end{assumption}
For Model C, we will be concerned with:
\begin{assumption}\label{assumptionC}
$m_1,m_2,n \rightarrow \infty$ such that $m_1/n \rightarrow c_1 > 1$ and $m_2/n \rightarrow c_2 > 1$.
\end{assumption}

\subsection{Central Limit Theorems}\label{sec:mainresults}
\begin{theorem}\label{th:wishart}
Consider Models A and B and define
\begin{align*}
a=(1-\sqrt{c})^2, \quad b=(1+\sqrt{c})^2 \text{.}  
\end{align*}
Under Assumption \ref{assumptionAB}, for an analytic function $f:\mathcal{U}\mapsto \C$ where $\mathcal{U}$ is an open subset of the complex plane which contains $[a,b]$, we have
\begin{align*}
\sum_{k=1}^n f \left ( \frac{x_k}{n} \right )-n\mu \; \overset{\mathcal{L}}{\rightarrow} \; \mathcal{N}\left ( \sum_{\ell=1}^r\bar{\mu}(z_{0,\ell}), \sigma^2 \right )\text{,}  
\end{align*}
where
\begin{align}
\mu &=\frac{1}{2\pi}\int_a^b f(x) \frac{\sqrt{(b-x)(x-a)}}{x}\, \dif x  \label{eq:mu} \\
\sigma^2 &=\frac{1}{2\pi^2}\int_a^b \frac{f(x)}{\sqrt{(b-x)(x-a)}} \left [ \mathcal{P} \int_a^b \frac{f'(y)\sqrt{(b-y)(y-a)}}{x-y} \,\dif y \right ]\, \dif x \label{eq:V1} 
\end{align}
with these terms independent of the spikes. The spike-dependent terms $\bar{\mu}(z_{0,\ell})$, $1\le \ell \le r$ admit
\begin{align}
\bar{\mu}(z_{0,\ell})&=\frac{1}{2\pi} \int_a^b \frac{f(x)}{\sqrt{(b-x)(x-a)}}\left (\frac{\sqrt{(z_{0,\ell}-a)(z_{0,\ell}-b)}}{z_{0,\ell}-x}-1 \right )  \, \dif x    \label{eq:muVal}
\end{align}
where
\begin{align*}
z_{0,\ell}=\begin{cases}
\frac{(1+c\delta_\ell)(1+\delta_\ell)}{\delta_\ell}, &  \text{for Model A} \\
\frac{(1+\nu_\ell)(c+\nu_\ell)}{\nu_\ell}, &  \text{for Model B}
\end{cases} \; .
\end{align*}
The branch of the square root $\sqrt{(z_{0,\ell}-a)(z_{0,\ell}-b)}$ is chosen according to Remark \ref{rem:smodelAB}. 
\end{theorem}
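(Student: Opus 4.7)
The plan is to compute the moment generating function $\Phi_n(s) = \E[\exp(s\sum_k f(x_k/n))]$ and establish the expansion $\log \Phi_n(s) = s\,n\mu + \tfrac{1}{2}s^2\sigma^2 + s\sum_{\ell=1}^r \bar\mu(z_{0,\ell}) + o(1)$ as $n\to\infty$, which is equivalent to the stated CLT. Starting from the joint density (\ref{eq:density}) and interchanging the $\x$-integrals with the $\z$-contour integrals,
\begin{align*}
\Phi_n(s) = K_n^{(A,B)} \oint_{\rm C}\!\cdots\!\oint_{\rm C} \det(z_j^{i-1})_{i,j=1}^r \det(l_i(z_j))_{i,j=1}^r \, \mathcal{I}_n(s;z_1,\ldots,z_r) \prod_{j=1}^r \dif z_j,
\end{align*}
where $\mathcal{I}_n(s;\z)$ is a LUE-type partition function with per-eigenvalue weight $x^{m-n} e^{-x+sf(x/n)}\prod_{t=1}^r (z_t-x)^{-1}$. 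The entire spike contribution is then carried by the outer contour integral.

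For the inner partition function I would invoke the Coulomb fluid / Dyson gas framework of \cite{Chen}, exactly as was done for the single-spike case in \cite{Passemier2}. After rescaling $x_j = n\lambda_j$, the leading-order equilibrium measure of the resulting Dyson gas is the Marchenko--Pastur density $\rho_{MP}$ on $[a,b]$, which directly produces the bulk terms $n\mu$ and $\tfrac{1}{2}s^2\sigma^2$ via the Chebyshev-weighted formulas (\ref{eq:mu}) and (\ref{eq:V1}). The additional log-sources $-\sum_t\log(z_t-\cdot)$ act as an $O(1)$ external potential whose impact on $\log\mathcal{I}_n$ is computed by a first-order variation of the equilibrium measure: the result is a spike-independent log-potential piece plus an $f$--$\z$ cross term involving the resolvent $\int (z_t-\lambda)^{-1}\rho_{MP}(\lambda)\,\dif\lambda$.

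Next, I would perform $r$-dimensional steepest descent on the outer contour integral under the rescaling $z_\ell = n w_\ell$. For Model A, $\log l_\ell(n w_\ell) = n\delta_\ell w_\ell/(1+\delta_\ell)$; for Model B the classical large-argument asymptotic ${}_0F_1(c;x)\sim \mathrm{const}\cdot x^{-(2c-1)/4}e^{2\sqrt{x}}$ yields an exponent of order $n$ after scaling. Combined with the $-n\int\log(w_\ell-\lambda)\rho_{MP}(\lambda)\,\dif\lambda$ coming from $\mathcal{I}_n$, the $\ell$-th saddle equation sets the logarithmic derivative of the spike factor equal to the Marchenko--Pastur Stieltjes transform at $w_\ell$; using the explicit form of this transform (which contains $\sqrt{(w-a)(w-b)}$) one solves to obtain precisely $w_\ell = z_{0,\ell}$. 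The branch specified in Remark \ref{rem:smodelAB} is exactly what makes the descent direction through this saddle admissible. Gaussian fluctuations about each $z_{0,\ell}$, together with the determinantal prefactors $\det(z_j^{i-1})\det(l_i(z_j))$ and the explicit exponential pieces inside $K_n^{(A,B)}$, cancel all spike-dependent exponentials of order $e^{n\cdot \mathrm{const}}$ and leave an $O(1)$ residue. Extracting the $s$-linear part of this residue, and using that the first-order variation of $\rho_{MP}$ under the $f$-perturbation is governed by the Chebyshev weight $1/(\pi\sqrt{(b-x)(x-a)})$, produces exactly (\ref{eq:muVal}).

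The main obstacle I anticipate is the \emph{joint} asymptotic control of $\mathcal{I}_n(s;\z)$ in $(s,\z)$: the Coulomb-fluid expansion must be valid uniformly in $\z$ along the deformed contour near $n\,z_{0,\ell}$, so that its $O(1)$ corrections can be cleanly combined with the saddle-point evaluation to isolate $\bar\mu$ to the required precision. A secondary difficulty is justifying the contour deformation of ${\rm C}$ through the saddle without incurring spurious residues at the eigenvalues $x_s$ (which lie inside ${\rm C}$ but, asymptotically, in $[na,nb]$, whereas the saddle $n z_{0,\ell}$ lies strictly outside this interval), together with the correct branch choice of $\sqrt{(z_{0,\ell}-a)(z_{0,\ell}-b)}$ fixed by Remark \ref{rem:smodelAB}.
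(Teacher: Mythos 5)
Your proposal follows essentially the same route as the paper's proof: compute the MGF from the contour-integral joint density, apply the Coulomb fluid framework of \cite{Chen} to the inner LUE-type partition function to extract $n\mu$, $\sigma^2$ and the resolvent cross-term, and then evaluate the outer $r$-fold contour integral by steepest descent at the saddles $z_{0,\ell}$ with the branch fixed as in Remark \ref{rem:smodelAB}. The only organizational difference is that the paper first applies the Andreief identity to reduce the $r$-dimensional contour integral to an $r\times r$ determinant of one-dimensional integrals, each handled by a scalar saddlepoint approximation, rather than a direct $r$-dimensional steepest descent; the two are equivalent here since the spikes are distinct.
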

\begin{proof}
See Section \ref{sec:proofwishart}.
\end{proof}

\begin{theorem} \label{th:F}
Consider Model C and define
\begin{align*}
a &= \frac{c_1(c_1+c_2-1)+c_2-2\sqrt{c_1c_2(c_1+c_2-1)}}{(c_1+c_2)^2}\text{,}  \\
 b &= \frac{c_1(c_1+c_2-1)+c_2+2\sqrt{c_1c_2(c_1+c_2-1)}}{(c_1+c_2)^2} \text{.}  
\end{align*}
Under Assumption \ref{assumptionC}, for an analytic function $f:\mathcal{U}\mapsto \C$ where $\mathcal{U}$ is an open subset of the complex plane which contains $[a,b]$, we have
\begin{align*} 
\sum_{k=1}^n f \left ( x_k \right )-n\mu_\F \; \overset{\mathcal{L}}{\rightarrow} \; \mathcal{N}\left ( \sum_{\ell=1}^r \bar{\mu}_\F(z_{0,\ell}), \sigma_\F^2 \right )
\end{align*}
where
\begin{align}
\mu_\F &=\frac{c_1+c_2}{2\pi}\int_a^b f\left(\frac{x}{1-x}\right) \frac{\sqrt{(b-x)(x-a)}}{x(1-x)}\, \dif x   \label{eq:muF} \\
\sigma_\F^2 &=\frac{1}{2\pi^2}\int_a^b \frac{f\left(\frac{x}{1-x}\right)}{\sqrt{(b-x)(x-a)}} \left [ \mathcal{P} \int_a^b \frac{f'\left(\frac{y}{1-y}\right)\sqrt{(b-y)(y-a)}}{x-y} \,\dif y \right ]\, \dif x  \; . \label{eq:V1F}  
\end{align}
The spike-dependent terms $\bar{\mu}_\F(z_{0,\ell})$, $1\le \ell \le r$ admit
\begin{align}
\bar{\mu}_\F(z_{0,\ell})&=\frac{1}{2\pi} \int_a^b \frac{f\left(\frac{x}{1-x}\right)}{\sqrt{(b-x)(x-a)}}\left (\frac{\sqrt{(z_{0,\ell}-a)(z_{0,\ell}-b)}}{z_{0,\ell}-x}-1 \right )  \, \dif x    \label{eq:muValF}
\end{align}
where
\begin{align*}
z_{0,\ell} &=\frac{(1+\nu_\ell)(c_1+\nu_\ell)}{\nu_\ell(c_1+c_2+\nu_\ell)} \text{.}
\end{align*}
The branch of the square root $\sqrt{(z_{0,\ell}-a)(z_{0,\ell}-b)}$ is chosen according to Remark \ref{rem:smodelC}.
\end{theorem}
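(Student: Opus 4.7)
The plan is to mirror the proof of Theorem \ref{th:wishart} (referenced as Section \ref{sec:proofwishart}), adapting it to the spiked non-central $F$ ensemble via the joint density \eqref{eq:densityF}. After the change of variables $\mathsf{f}_k = x_k/(1+x_k)$, this density has exactly the Coulomb-gas structure employed in the proofs for Models~A and~B: a Vandermonde-squared factor, a product weight $\prod_j \mathsf{f}_j^{m_1-n}(1-\mathsf{f}_j)^{m_2-n}$, and an $r$-fold contour integral whose integrand couples to the spikes only through $l_i(z_j) = {}_1F_1(m_1+m_2-n+1, m_1-n+1; n\nu_i z_j)$ and through the factor $\prod_{t,s}(z_t-\mathsf{f}_s)^{-1}$. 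The LSS is $\sum_k f(x_k) = \sum_k f(\mathsf{f}_k/(1-\mathsf{f}_k))$, which accounts for the appearance of $f(x/(1-x))$ in \eqref{eq:muF}--\eqref{eq:muValF}.

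First I would compute $\E[\exp(\lambda\sum_k f(x_k))]$ by interchanging the $\mathsf{f}$-integration with the $z$-integration, and evaluate the inner $\mathsf{f}$-expectation by the Coulomb-fluid framework of \cite{Chen}, treating $\prod_{t,s}(z_t-\mathsf{f}_s)^{-1}$ as a source term in the effective potential. Absent the spikes and the contour integral, this standard analysis reproduces the Wachter equilibrium measure with support $[a,b]$, yielding the leading mean $n\mu_\F$ from \eqref{eq:muF} and the Gaussian variance $\sigma_\F^2$ from \eqref{eq:V1F} in the usual fashion (as in \cite{Passemier2}). The perturbation from $\prod_{t,s}(z_t-\mathsf{f}_s)^{-1}$ contributes to leading order a factor $\exp\bigl(-n\sum_{t=1}^r G(z_t)\bigr)$, where
\begin{equation*}
G(z) = \int_a^b \rho_\F(x)\log(z-x)\,\dif x
\end{equation*}
and $\rho_\F$ is the limiting Wachter density.

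Next I would combine this with the large-$n$ asymptotics of $l_i(z_\ell)$. Writing ${}_1F_1(m_1+m_2-n+1,m_1-n+1;n\nu_i z)$ via a Barnes- or Euler-type integral representation and applying Laplace's method produces an exponential factor $\exp\bigl(n\,H(z_\ell;\nu_\ell)\bigr)$, so that each $z_\ell$-contour integral has the form $\oint \exp\bigl(n[H(z_\ell;\nu_\ell)-G(z_\ell)]+O(1)\bigr)\dif z_\ell$. Solving the saddle-point equation $H'(z;\nu_\ell)=G'(z)$, one finds the unique relevant root $z_{0,\ell}=(1+\nu_\ell)(c_1+\nu_\ell)/[\nu_\ell(c_1+c_2+\nu_\ell)]$; the branch choice for $\sqrt{(z_{0,\ell}-a)(z_{0,\ell}-b)}$ is dictated by the direction of steepest descent, as recorded in Remark~\ref{rem:smodelC}. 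Since the spikes are distinct, the $r$ saddles are separated and the Vandermonde $\prod_{i<j}(z_j-z_i)^{-1}$ coming from the coupling of the $z_\ell$ does not produce singular contributions. The steepest-descent evaluation, matched against the $O(1)$ expansion of the inner expectation, contributes additively the correction $\sum_\ell \bar{\mu}_\F(z_{0,\ell})$; standard Joukowski-type manipulations on the Wachter density (parallel to those used for Models~A,B) recast this contribution into the integral form \eqref{eq:muValF}.

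The main obstacle will be the saddlepoint analysis of the ${}_1F_1$ factor. Compared to Model~B, where ${}_0F_1$ admits a clean exponential asymptotic via a modified Bessel function, ${}_1F_1(m_1+m_2-n+1,m_1-n+1;n\nu z)$ carries both parameters scaling linearly with $n$, so a more delicate Laplace analysis of its integral representation is required to extract the effective potential $H(z;\nu)$ uniformly in $z$ on the relevant contour, and to verify that $z_{0,\ell}$ is the dominant saddle lying outside $[a,b]$ and on an admissible deformation of~${\rm C}$. One must also confirm the absence of obstructing singularities and justify the exchange of limits with the steepest-descent evaluation; once this is established, the remaining computations and the identification of $\bar{\mu}_\F(z_{0,\ell})$ follow the template of the proof of Theorem~\ref{th:wishart} essentially verbatim.
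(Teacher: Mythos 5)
Your proposal follows essentially the same route as the paper: the MGF is evaluated by applying the Coulomb fluid framework of \cite{Chen} to the inner $\mathsf{f}$-integral (with the $\prod_{t,s}(z_t-\mathsf{f}_s)^{-1}$ factor absorbed into the source term $g$, producing the leading terms $n\mu_\F$, $\sigma_\F^2$ and the $O(1)$ cross term $\bar{\mu}_\F(z_j)$), followed by a saddlepoint evaluation of the $z$-contour integrals at $z_{0,\ell}$ with the branch choice of Remark \ref{rem:smodelC}. The only cosmetic difference is that the paper first applies the Andreief identity to reduce the $r$-fold contour integral to an $r\times r$ determinant of one-dimensional integrals and then saddlepoints each entry (deferring the delicate ${}_1F_1$ asymptotics you flag to \cite[Section 5.3]{Passemier2}), whereas you treat it as a multidimensional steepest descent with $r$ separated saddles; for distinct spikes these yield the same result.
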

\begin{proof}
See Section \ref{sec:SaddleC}.
\end{proof}

For all three models, these theorems generalize our previous results in \cite{Passemier2} to multi-spiked scenarios, collapsing to the same expressions for the special case $r = 1$.  We see that each spiked eigenvalue, whilst not affecting the asymptotic mean or variance to leading order, contributes an $O(1)$ correction term to the mean, and the contributions across multiple spikes are additive.

\subsection{Extension to spike multiplicities} \label{sec:spikeMult}

For completeness, we now consider the case where some or all of the spiked eigenvalues coincide.  To this end, we require a slight reformulation of the three matrix models introduced previously, with additional notation:
\begin{list}{$\bullet$}{\leftmargin=2em}
\item {\em Model A:  Spiked central Wishart with spike multiplicities:} \\ As before, but now $\mathbf{\Sigma}$ has $M$ distinct spike eigenvalues  $1+\delta_1 >  \cdots > 1+\delta_M$, with $\delta_\ell >0$ having multiplicity $k_\ell$, for $1\le \ell \le M$, such that $\sum_{\ell=1}^M k_\ell = r$, and all other eigenvalues equal to $1$. \\ 
\item {\em Model B: Spiked non-central Wishart with spike multiplicities:}\\
As before, but now the rank-$r$ matrix $\mathbf{\Theta}$ has $M$ distinct spike eigenvalues $n\nu_1 > \cdots > n\nu_M >0$, where $\nu_\ell$ has multiplicity $k_\ell$, for $1\le \ell \le M$, such that $\sum_{\ell=1}^M k_\ell = r$. \\ 
\item {\em Model C: Spiked multivariate F with spike multiplicities:}\\
As before, but now the rank-$r$ matrix $\mathbf{\Theta}$ has $M$ distinct spike eigenvalues $n\nu_1 > \cdots > n\nu_r>0$, where $\nu_\ell$ has multiplicity $k_\ell$, for $1\le \ell \le M$, such that $\sum_{\ell=1}^M k_\ell = r$.
\end{list}
With spike multiplicities, whilst one may naively set the spikes to be equal in Theorems \ref{th:wishart} and \ref{th:F}, this is not mathematically justified since the eigenvalue densities given in \eqref{eq:density} and \eqref{eq:densityF}, used in deriving these theorems, are no longer valid.  This is because $\text{det}(l_i(z_j))_{i,j=1}^r$ equates to $0$ whilst the constants  $K_{n}^{(A,B)}$ and $K_{n}^{(C)}$ tend to infinity.   Therefore, here we consider the appropriate modifications which take account of spike multiplicities.  As we will see, it turns out that deriving CLTs for LSS of the three matrix models, analogous to Theorems \ref{th:wishart} and \ref{th:F}, is faced with additional technical challenges when the spikes coincide.  Thus, as we will describe, this will prevent us from deriving the desired CLT in full generality.

We start by presenting the appropriately modified joint eigenvalue densities for each of the three spiked matrix models.  These are obtained by replacing the hypergeometric functions of two matrix arguments in \cite[Eq. (95), (102) and (109)]{James}, as before, but now we use Corollary \ref{corr:Multi} rather than the simpler expression in Corollary \ref{corr:Simple}. Consequently, the joint probability density function of the eigenvalues $x_k$, $1 \le k \le n$ for Models A and B admits the unified form
\begin{align}
\frac{ K_{n}^{(A,B)}}{(2 \pi \imath)^r} \prod_{i<j}^n(x_j-x_i)^2 \prod_{j=1}^n  x_j^{m-n}e^{-x_j}     \oint_{\rm C}\cdots\oint_{\rm C} \frac{{\text{det}} \left ( z_j^{i-1}\right )_{i,j=1}^r\text{det}(\bar{\mathbf{A}} )}{\prod_{t=1}^r\prod_{s=1}^n (z_t-x_s)}\prod_{j=1}^r\dif z_j \text{,} 
 \label{eq:densityMult}
\end{align}
for $x_k  \in (0, \infty), 1 \leq k \leq n$, the contour ${\rm C}$ encloses counter-clockwise $x_1, \ldots, x_n$ in its interior and $K_{n}^{(A,B)}$ is a normalization constant given explicitly as
\begin{align}
K_{n}^{(A,B)}= \begin{cases}
\frac{\prod_{i < j}^M (\delta_i-\delta_j)^{-1} \prod_{j=1}^M (1+\delta_j)^{k_\ell(r-m-n)}}{r!\prod_{k=1}^n (m-k)!\prod_{j=r+1}^n (n-j)!}, &  \text{for Model A} \\
\frac{\prod_{i < j}^M (\nu_i-\nu_j)^{-1} \prod_{j=1}^M \nu_j^{k_\ell(r-n)}\exp({-n \sum_{j=1}^M k_\ell \nu_j})}{r!n^{r(n-r)+1}\prod_{j=r+1}^n (m-j)!(n-j)!\prod_{\ell=1}^M\prod_{j=1}^{k_\ell-1}(k_\ell-j)!(m-n+j)!},  &  \text{for Model B.}
 \end{cases}  \nonumber
\end{align}
Moreover, $\bar{\mathbf{A}} = \left[\bar{\mathbf{A}}_1^T,\dots,\bar{\mathbf{A}}_M^T\right]^T$ is an $r \times r$ matrix which captures the effect of the spiked eigenvalues, with $k_\ell \times r$ submatrix $\bar{\mathbf{A}}_\ell$ taking entries 
\begin{align*}
( \bar{\mathbf{A}}_\ell )_{i,j} =\begin{cases} 
z_j^{k_\ell - i} \exp \left ( \frac{\delta_\ell}{\delta_\ell+1} z_j \right ) & \text{for Model A}\\
z_j^{k_\ell - i} {}_0 F_1 ( m-n+k_\ell-i+1; n\nu_\ell z_j )&\text{for Model B.}
\end{cases}
\end{align*}

Similarly, with $x_k$, $1 \le k \le n$ the eigenvalues for Model C, we obtain the joint density of $\mathsf{f}_k=x_k/(1+x_k) \in (0,1)$:
\begin{align*}
&\frac{ K_{n}^{(C)}}{(2 \pi \imath)^r}\prod_{i < j}^n (\mathsf{f}_j-\mathsf{f}_i)^2
 \prod_{j=1}^n \mathsf{f}_j^{m_1-n}(1-\mathsf{f}_j)^{m_2-n} \oint_{\rm C}\cdots\oint_{\rm C} \frac{{\text{det}} \left ( z_j^{i-1}\right )_{i,j=1}^r\text{det}  (\bar{\mathbf{A}}) }{\prod_{t=1}^r\prod_{s=1}^n (z_t-\mathsf{f}_s)}\prod_{j=1}^r\dif z_j \text{,}
\end{align*}
where the contour ${\rm C}$ encloses counter-clockwise $\mathsf{f}_1, \ldots, \mathsf{f}_n$ in its interior, $K_{n}^{(C)}$ is a normalization constant given explicitly as
\begin{align*}
&K_{n}^{(C)}=\prod_{\ell=1}^M\prod_{j=1}^{k_\ell-1}\frac{(m_1+m_2-n+j)!}{(k_\ell-j)!(m_1-n+j)!}\\
& \hspace*{1cm}\times \frac{\prod_{i < j}^M (\nu_i-\nu_j)^{-1} \prod_{j=1}^M \nu_j^{k_\ell(r-n)}\e^{-n\sum_{j=1}^M k_\ell\nu_j}\prod_{j=r+1}^n (m_1+m_2-j)!}{r!n^{r(n-r)+1}\prod_{j=r+1}^n (n-j)!\prod_{k=1}^n (m_2-k)!\prod_{j=r+1}^n (m_1-j)!} 
\text{,}
\end{align*}
whilst $\bar{\mathbf{A}} = \left[\bar{\mathbf{A}}_1^T,\dots,\bar{\mathbf{A}}_M^T\right]^T$ is an $r \times r$ matrix which captures the effect of the spiked eigenvalues, with $k_\ell \times r$ submatrix $\bar{\mathbf{A}}_\ell$ taking entries 
\begin{align*}
( \bar{\mathbf{A}}_\ell )_{i,j} =
z_j^{k_\ell - i} {}_1 F_1 ( m_1+m_2-n+k_\ell-i+1,m_1-n+k_\ell-i+1; n\nu_\ell z_j )\text{.}
\end{align*}

Whilst facilitating the derivations of the desired CLTs, these joint eigenvalue density expressions may also be of independent interest. With these, we can now tread the footsteps of the proof of Theorem \ref{th:wishart} for Models A and B, and that of Theorem \ref{th:F} for Model C.  It turns out that the derivation follows in a straightforward manner, up to the point of computing the saddlepoint approximation, in which case we encounter difficulties.  In particular, it turns out that as the number of spike multiplicities $k_\ell$ increase, one requires a more and more accurate saddlepoint approximation, which becomes unweildy beyond small multiplicity scenarios.  In Section \ref{sec:ext} we give specific details of the derivation method, where we employ a refined saddlepoint approximation (i.e., by including a correction term) which allows computation of the desired CLT for the cases $r = 2$ and $r = 3$, with any spike multiplicity. We also show that with $r=4$, a further refinement of the saddlepoint approximation is needed. We give these derivations explicitly for Models A and B, but the corresponding results follow readily for Model C also.  For $r=2$ and $r=3$, we establish the same result as in Theorem \ref{th:wishart} and Theorem \ref{th:F}, upon setting the spikes to be equal.  These observations motivate the following claim:

{\bf Claim 1:} \emph{In addition to the case of distinct spikes, the results in Theorems \ref{th:wishart} and \ref{th:F} continue to hold when some or all of the spikes coincide.}

Whilst it appears difficult to prove this claim in full generality with our current derivation methods, the evidence provided for the cases $r = 2$ and $r = 3$, as well as all of our numerical simulations for $r \geq 4$ suggest that this is indeed true.  We point out that this claim is consistent with results reported in \cite{Wang3}, which considered Model A (but not Models B and C) and accounted for arbitrary multiplicities. Therein, as indicated previously, they used substantially different methods to give an alternative to Theorem \ref{th:wishart}, and they demonstrated that it was inconsequential to their final result whether or not the spikes were distinct or equal.

\section{Derivations of main results}

This section compiles the proofs of the key technical results in the paper.

\subsection{Proof of Lemma \ref{le:Main}} \label{sec:LemProof}

\begin{proof}

For the most part, the proof mirrors that of \cite[Appendix A]{Onatski3}, which considered the case $p = 0$, $q = 0$.  We first give a slight modification of the result in \cite[Lemma G.1]{Onatski3}, which presents the  torus scalar product relation for Jack polynomials. In particular, letting $\mathbfcal{Z} = {\rm diag}(z_1, \ldots, z_r)$ and $\mathbfcal{Z}^* = {\rm diag}(z^*_1, \ldots, z^*_r)$, with ${}^*$ denoting complex conjugate, we have
\begin{align} \label{eq:Torus}
\big \langle  C_{\kappa}^{(\alpha)}  ( \mathbfcal{Z} ), C_\tau^{(\alpha)} ( \mathbfcal{Z} )  \big \rangle_{\alpha} &:= \frac{1}{r! (2 \pi \iota)^r}  \oint \cdots \oint C_\kappa^{(\alpha)} ( \mathbfcal{Z} )  C_\tau^{(\alpha)} ( \mathbfcal{Z}^* )  \frac{ \prod_{1 \leq i,j \leq r , \;  i \neq j} \left( 1 - z_i / z_j \right)^{\frac{1}{\alpha}} }{ \prod_{j=1}^r  z_j}  \prod_{j=1}^r {\rm d} z_j  \nonumber \\
&= \left\{  
\begin{array}{ll}
         \left( \alpha^{|\kappa|} | \kappa |! \right)^2 \left( \prod_{j=1}^r \frac{ \Gamma( (r-j+1)/\alpha) }{ \Gamma(1/ \alpha) \Gamma( 1 + (r-j)/\alpha) } \right) \frac{ v (\kappa, \alpha) }{ w ( \kappa, \alpha) } & \mbox{if $\kappa = \tau$}  \\
        0 & \mbox{if $\kappa \neq \tau$}
\end{array}
\right.
\end{align}
where the integration contours are unit circles in the complex plane, whilst $v (\kappa, \alpha)$ and $w (\kappa, \alpha)$ are constants which are related to the so-called Ferrers diagram (equivalently, Young diagram) of the partition $\kappa$.  The specific values of these constants will not be needed here; but for specific details, refer to \cite[Appendix A]{Onatski3} and \cite{Dumitriu2007}.  Recall also the notation $| \kappa | = \kappa_1 + \kappa_2 + \cdots$. 

Consider the right-hand side of (\ref{eq:pFqResult}), which we label ${\rm RHS}$.  One can verify that this quantity remains unchanged upon transforming
\begin{align*}
\mathbfcal{Z}\to \psi \mathbfcal{Z}, \quad  \mathbf{Y} \to \psi \mathbf{Y},  \quad \mathbf{X} \to \psi^{-1} \mathbf{X} , 
\end{align*}
for any constant $\psi$. Thus, without loss of generality, we may henceforth assume that ${\rm max}_{j\le n} \, |y_j|  < 1$.
\begin{remark}\label{rem:maxx}
When ${\rm max}_{j\le n} \, |x_j| \times {\rm max}_{j\le n} \, |y_j|  < 1$, we can additionally assume that ${\rm max}_{j \le n} \, |x_j|  < 1$. Indeed, define
\begin{align*}
\psi=\frac{1}{\displaystyle\max_{j \le n} \, |\psi y_j|+\varepsilon}\text{,} \quad\quad\text{with }0 <\varepsilon < \frac{1}{\displaystyle\max_{j \le n} |x_j|} - \max_{j \le n} |y_j|\text{.}
\end{align*}
With this choice of $\psi$, one can verify that ${\rm max}_{j\le n} \, |\psi y_j|  < 1$ and ${\rm max}_{j\le n} \, |\psi^{-1} x_j|  < 1$.
\end{remark}
Moreover, since each $z_j$ defined in ${\rm RHS}$ traces a contour which encircles $\{ y_j \}_{j=1}^n$, due to the scaling above, we may now deform each of these contours to trace the unit circle in the complex plane, without changing the integral.  This will allow us to make use of (\ref{eq:Torus}).
 
To commence the proof, we expand the ${}_p F_q^{(\alpha)}$ function in ${\rm RHS}$ via (\ref{eq:pFqSeries}). When $p=q+1$, this is possible since the maximum eigenvalue of $\mathbfcal{X}$ is $\max_{ j \le n} |x_j|$ which is less than one (see Remark \ref{rem:maxx}) and $\max_{ j \le n} |z_j|=1$, so that the series converges. Since $\prod_{j>i}^r(z_j-z_i)^{2/\alpha}$ equals $(-1)^{r(r-1)/(2\alpha)}\prod_{j=1}^r z_j^{(r-1)/\alpha}\prod_{1 \leq i,j \leq r , \;  i \neq j} (1-z_iz_j^{-1})^{1/\alpha}$, we rewrite $\omega^{(\alpha)}$ as
\begin{align} \label{eq:Om}
\omega^{(\alpha)}  \left( \mathbfcal{X}, \mathbf{Y}, \mathbfcal{Z} \right) = \gamma^{(\alpha)}   \prod_{j=1}^r (z_ j x_j)^{- \theta}   \left[ \prod_{j=1}^r   \prod_{s = 1}^n \left( 1 - y_s z_j^* \right)   \right]  \frac{ \prod_{1 \leq i,j \leq r , \;  i \neq j} \left( 1 - z_i / z_j \right)^{1/\alpha} }{ { \prod_{j=1}^r  z_j} }
\end{align}
where
\begin{align*}
\gamma^{(\alpha)} = \prod_{j=1}^r \left[ \frac{ \Gamma( (n+1-j)/\alpha ) \Gamma(1/\alpha) }{ \Gamma( (r+1-j)/\alpha) }    \right] \;  .
\end{align*}
In addition, the quantity in square brackets in (\ref{eq:Om}) admits the expansion \cite[Lemma G.2]{Onatski3}
\begin{align*}
\prod_{j=1}^r  \prod_{s = 1}^n \left( 1 - y_s z_j^* \right)   = \sum_{t=0}^\infty  \sum_{\tau \vdash t, \, \ell(\tau) \leq r} \frac{ w( \tau, \alpha )}{ (\alpha^{|\tau|} |\tau|! )^2 } C_\tau^{(\alpha)} (\mathbf{Y})  C_\tau^{(\alpha)} (\mathbfcal{Z}^*) \; .
\end{align*}
As explained in \cite{Onatski3}, the series on the right-hand side of this equality converges uniformly over $\Omega_\rho=\{\mathbfcal{Z}^*: \max_{j\le r} |z_j^*| \le \rho^{-1}\}$, for any $\rho > \max_{s\le n} |y_s|$ since the function on the left-hand side is analytic in a open region containing $\Omega_\rho$. With these, the right-hand side of (\ref{eq:pFqResult}) becomes
\begin{align} \label{eq:RHS}
{\rm RHS} &= \frac{\phi^{(\alpha)}(b_{1:q})}{\phi^{(\alpha)}(a_{1:p})} \frac{\gamma^{(\alpha)}}{ \prod_{j=1}^r x_j^{\theta} }  \sum_{k=0}^\infty \, \sum_{\kappa \vdash k, \, \ell(\kappa) \leq r } \, \sum_{t=0}^\infty \, \sum_{\tau \vdash t, \, \ell( \tau ) \leq r } \frac{1}{k!} \frac{ (a_{1:p} - \theta)_\kappa^{(\alpha)} }{ (b_{1:q} - \theta)_\kappa^{(\alpha)} } \frac{ w( \tau, \alpha )}{ (t! \alpha^t )^2 }  \frac{C_\kappa^{(\alpha)}(\mathbfcal{X})  C_\tau^{(\alpha)}(\mathbf{Y}) }{ C_\kappa^{(\alpha)}(\mathbf{I}_r) } \nonumber \\
& \hspace*{3cm} \times \Big \langle  C_{\kappa}^{(\alpha)} (\mathbfcal{Z}), \biggl[ \prod_{j=1}^r z_j^{\theta} \biggr] C_\tau^{(\alpha)} (\mathbfcal{Z})   \Big \rangle_{\alpha}
\end{align}
where, for notational convenience, we have introduced 
\begin{align} \label{eq:PochNew}
(a_{1:p} - \theta)_\kappa^{(\alpha)} = \prod_{j=1}^p  (a_j - \theta)_\kappa^{(\alpha)}  , \quad \quad \quad (b_{1:q} - \theta)_\kappa^{(\alpha)} = \prod_{j=1}^q (b_j - \theta)_\kappa^{(\alpha)}  \; .
\end{align}
The interchange of the order of integration and summation in \eqref{eq:RHS} is possible because, as seen previously, the two series converge uniformly over the unit torus.

To proceed, given that $\theta$ is an integer (enforced in the lemma statement by requiring $n-r+1$ to be even whenever $\beta$ is odd), we may utilize the recurrence relation for the ``$J$-normalized'' Jack polynomials in \cite[Proposition 5.1]{Stanley}, along with their connection to our ``$C$-normalized'' Jack polynomials \cite[Table 6]{Dumitriu2007}, to obtain\footnote{Hereby, we correct an argument of \cite{Onatski3}, where the multiplying factor $K_\tau (\alpha)$ was assumed unity.  It turns out, however, that this does not affect the final result.}
\begin{align}  \label{eq:NewProp}
\left[ \prod_{j=1}^r z_j^{\theta} \right] C_\tau^{(\alpha)} (\mathbfcal{Z}) =  K_\tau (\alpha) C_{\tilde{\tau}}^{(\alpha)} (\mathbfcal{Z}) \; ,
\end{align}
where
\begin{align} \label{eq:KConst}
K_\tau (\alpha) = \alpha^{- r \theta}  \frac{t!}{(t+r \theta)!}  \frac{ w( \tilde{\tau}, \alpha ) }{ w( \tau, \alpha ) }  \prod_{j=1}^\theta \frac{1}{ c_{(\tau_1 + j, \cdots, \tau_r + j)}  (\alpha) }
\end{align}
and
\begin{align} \label{eq:tautild}
\tilde{\tau} = ( \tau_1 + \theta , \ldots, \tau_r + \theta ) \text{.}
\end{align}
Here, $\tilde{\tau}$ is a partition corresponding to $\tau$ but with each element shifted by a constant $\theta$, whilst the constants $c_{(\tau_1 + j, \cdots, \tau_r + j)} (\alpha)$, $j = 1, \ldots, \theta$, are defined based on the Ferrers diagram of the associated partitions; the specific values of these are given in \cite[Proposition 5.5]{Stanley}, though this will not be needed subsequently.


\begin{remark} 

For the special case $\alpha = 1$ ($\beta = 2$), we can obtain a simple and explicit expression for (\ref{eq:KConst}).  In particular, following \cite{James},  
\begin{align*}
C_\tau^{(1)} (\mathbfcal{Z}) = \chi_{[\tau]} (1) \chi_{ \{ \tau \} } (\mathbfcal{Z})
\end{align*}
where $\chi_{[\tau]} (1)$ and $\chi_{ \{ \tau \} } (\mathbfcal{Z})$ are representation-theoretic quantities given by
\begin{align} 
\chi_{[\tau]} (1) = t! \frac{ \prod_{i<j}^r (\tau_i - \tau_j - i + j) }{ \prod_{j=1}^r  ( r + \tau_j - j )! }  , \quad \quad  \chi_{ \{ \tau \} } (\mathbfcal{Z}) = \frac{  {\rm det}\left( z_i^{\tau_j + r - j} \right)_{i,j=1}^r } { {\rm det}\left( z_i^{r - j} \right)_{i,j=1}^r }  \nonumber 
\end{align}
from which it becomes clear that 
\begin{align*}
K_\tau (1) = \frac{ \chi_{[ \tau ]}(1) }{ \chi_{[ \tilde{\tau} ]}(1) } \, = \,  \frac{t!}{(t+r(n-r))!} \prod_{j=1}^r \frac{ (n+ \tau_j - j ) ! }{ (r + \tau_j - j ) ! }   \; .
\end{align*}

\end{remark}

With the above results, the second line of (\ref{eq:RHS}) can be written as
\begin{align*}
\Big \langle  C_{\kappa}^{(\alpha)} (\mathbfcal{Z}), \biggl[ \prod_{j=1}^r  z_j^\theta \biggr] C_\tau^{(\alpha)} (\mathbfcal{Z})   \Big \rangle_{\alpha} = K_\tau (\alpha)  \Big \langle  C_{\kappa}^{(\alpha)} (\mathbfcal{Z}),  C_{\tilde{\tau}}^{(\alpha)} (\mathbfcal{Z})   \Big \rangle_{\alpha}  \; .
\end{align*}
Now, from (\ref{eq:Torus}), observe that this is zero whenever $\kappa \neq \tilde{\tau}$.  From the expression (\ref{eq:RHS}), for every $\tilde{\tau}$ there is a matching $\kappa$ (but not vice-versa, since, from (\ref{eq:tautild}), each of the elements of partition $\tilde{\tau}$ are constrained to be greater than or equal to $\theta$, whilst there is no such constraint for $\kappa$).  For this reason, (\ref{eq:RHS}) simplifies to 
\begin{align*}
{\rm RHS}  &= \frac{\phi^{(\alpha)}(b_{1:q})}{\phi^{(\alpha)}(a_{1:p})} \frac{\gamma^{(\alpha)}}{ \prod_{j=1}^r x_j^\theta } \sum_{t=0}^\infty  \sum_{\tau \vdash t, \, \ell (\tau) \leq r} \frac{1}{(t + r \theta)!} \frac{ (a_{1:p} - \theta )_{\tilde{\tau}}^{(\alpha)} }{ (b_{1:q} - \theta)_{\tilde{\tau}}^{(\alpha)} } \frac{ w( \tau, \alpha )}{ (t! \, \alpha^t )^2 }  \frac{C_{\tilde{\tau}}^{(\alpha)}(\mathbfcal{X})  C_\tau^{(\alpha)}(\mathbf{Y}) }{ C_{\tilde{\tau}}^{(\alpha)}(\mathbf{I}_r) } \\
& \hspace*{3cm} \times K_\tau (\alpha)    \Big \langle  C_{\tilde{\tau}}^{(\alpha)}(\mathbfcal{Z}),  C_{\tilde{\tau}}^{(\alpha)} (\mathbfcal{Z})   \Big \rangle_{\alpha}  \; .
\end{align*}
Applying now (\ref{eq:Torus}), we evaluate the torus scalar product.  Similar to (\ref{eq:NewProp}), we may also write
\begin{align*}
 C_{\tilde{\tau}}^{(\alpha)} (\mathbfcal{X}) =   \frac{ \prod_{j=1}^r   x_j^{\theta} }{K_\tau (\alpha) }  \,   C_{\tau}^{(\alpha)} (\mathbfcal{X}) \; 
\end{align*}
whilst, in addition, \cite[Lemma 7]{Dubbs} implies that $C_{\tau}^{(\alpha)} (\mathbfcal{X}) = C_{\tau}^{(\alpha)} (\mathbf{X})$ for any $\tau$ with $\ell(\tau) \leq r$, and $C_{\tau}^{(\alpha)} (\mathbf{X}) = 0$ if $\ell(\tau) > r$.  Together, these results yield
%
\begin{align} \label{eq:RHSLast}
{\rm RHS}  &= \frac{\phi^{(\alpha)}(b_{1:q})}{\phi^{(\alpha)}(a_{1:p})} \sum_{t=0}^\infty  \sum_{\tau \vdash t, \, \ell( \tau ) \leq n} \tilde{\gamma}^{(\alpha)}  \frac{1}{t!} \frac{ (a_{1:p} - \theta)_{\tilde{\tau}}^{(\alpha)} }{ (b_{1:q} - \theta)_{\tilde{\tau}}^{(\alpha)} }  \frac{C_{\tau}^{(\alpha)}(\mathbf{X})  C_\tau^{(\alpha)}(\mathbf{Y}) }{ C_{\tau}^{(\alpha)}(\mathbf{I}_n) } 
\end{align}
where
\begin{align*}
 \tilde{\gamma}^{(\alpha)} = \frac{ \alpha^{2r \theta } ( t + r \theta )! w( \tau, \alpha)  v(\tilde{\tau}, \alpha )  }{ t! \; w(\tilde{\tau}, \alpha)  } \frac{ C_\tau^{(\alpha)} ( \mathbf{I}_n ) }{ C_{\tilde{\tau}}^{(\alpha)} ( \mathbf{I}_r ) }  \prod_{j=1}^r \frac{ \Gamma( (n+1-j)/\alpha  )  }{  \Gamma( 1 + (r-j)/\alpha  )  } \; .
\end{align*}
This turns out to be the same factor obtained in \cite[Page 22]{Onatski3}, where it was evaluated as
\begin{align*}
\tilde{\gamma}^{(\alpha)} = 1 \; .
\end{align*}
Finally, recalling (\ref{eq:PochNew}), we note that 
\begin{align}
(a_{1:p} - \theta )_{\tilde{\tau}}^{(\alpha)} &=  \prod_{j=1}^p  (a_j - \theta)_{\tilde{\tau}}^{(\alpha)}  \nonumber \\ 
&= \prod_{j=1}^p \prod_{i=1}^r  \frac{ \Gamma \left( a_j - \frac{i-1}{\alpha} + \tau_i \right) }{ \Gamma \left( a_j - \theta - \frac{i-1}{\alpha}  \right) }   \nonumber \\
&= \phi^{(\alpha)}(a_{1:p}) \times   (a_1)_{\tau}^{(\alpha)} (a_2)_{\tau}^{(\alpha)} \cdots (a_p)_{\tau}^{(\alpha)}  \nonumber 
\end{align}
and similarly
\begin{align}
(b_{1:q} - \theta )_{\tilde{\tau}}^{(\alpha)} &= \phi^{(\alpha)}(b_{1:q}) \times   (b_1)_{\tau}^{( \alpha )} (b_2)_{\tau}^{( \alpha)} \cdots (b_q)_{\tau}^{( \alpha )}  \nonumber \; .
\end{align}
Using these expressions in (\ref{eq:RHSLast}) and recalling (\ref{eq:pFqSeries}), we immediately have the left-hand side of (\ref{eq:pFqResult}), thereby completing the proof.

\end{proof}

\subsection{Proof of Corollary \ref{corr:Multi}}   \label{sec:Corr1Proof}

\begin{proof}
With $\beta = 2$ (i.e., $\alpha = 1$), the parameters in Lemma \ref{le:Main} simplify to:
\begin{align*} 
\omega^{(1)}  \left( \mathbfcal{X}, \mathbf{Y}, \mathbfcal{Z} \right) &= (-1)^{r(r-1)/2} \prod_{j=1}^r  \frac{ (n-j)! }{  (r-j)!  }   \prod_{i < j}^r ( z_j - z_i )^{2} \prod_{j=1}^r \left[  x_j^{-(n-r)} \prod_{s=1}^n (z_j - y_s )^{-1}  \right] 
\end{align*}
and
\begin{align*} 
\phi^{(1)} (a_{1:p})  = \prod_{j=1}^p  \prod_{i=1}^r  \prod_{k=1}^{n-r} \left( a_j - n + r - i + k  \right)  \; = \; \prod_{j=1}^p \prod_{i=0}^{r-1} \frac{(a_j-r+i)! }{ (a_j-n+i)! }   \; .
\end{align*}

We may also express the ${}_p F_q^{(1)}$ on the right-hand side of (\ref{eq:pFqResult}) in an equivalent determinant form by applying \cite[Lemma 5]{Chiani2}, leading to
\begin{align*}
{}_p F_q^{(1)} \left( a_{1:p} - n + r; b_{1:q}-n + r ; \mathbfcal{X}, \mathbfcal{Z}  \right) = K \cdot K_2 \, \frac{ {\rm det} \left( \bar{\mathbf{A}} \right) }{ \prod_{ i < j}^M (\tilde{x}_i - \tilde{x}_j)  \prod_{ i < j}^r (z_j - z_i )   }   
\end{align*}
where
\begin{align*} 
K = \prod_{i=1}^r (r - i)! \frac{  \prod_{j=1}^q  \prod_{k= 0}^{r-1}   (b_j - n +  k )! / (b_j - n )! }{  \prod_{j=1}^p  \prod_{k= 0}^{r-1}   (a_j - n +  k )! / (a_j - n )! } \; 
\end{align*}
and 
\begin{align*}
K_2 = (-1)^{r(r-1)/2} \prod_{\ell = 1}^M K_\ell (a_{1:p}, b_{1:q} )   \; .
\end{align*}
Here,  $\bar{\mathbf{A}} = \left[\bar{\mathbf{A}}_1^T,\dots,\bar{\mathbf{A}}_M^T\right]^T$ is an $r \times r$ matrix, 
with $k_\ell \times r$ submatrix $\bar{\mathbf{A}}_\ell$ taking entries
\begin{align*}
( \bar{\mathbf{A}}_\ell )_{i, j} = z_j^{k_\ell - i} {}_p F_q ( a_{1:p}-n+k_\ell-i+1, b_{1:q}-n+k_\ell-i+1; \tilde{x}_\ell z_j )  \; \; .
\end{align*}
Applying these simplifications in  (\ref{eq:pFqResult}), and noting that 
\begin{align*} 
\prod_{ i < j}^r (z_j - z_i )  = {\rm det} \left( z_j^{i-1} \right)_{i,j = 1}^r \,  ,
\end{align*}
we get
\begin{align*}
& {}_p F_q^{(1)} \left( a_{1:p}; b_{1:q}; \mathbf{X}, \mathbf{Y}   \right) = \frac{K( a_{1:p}, b_{1:q})}{\prod_{ i < j}^M (\tilde{x}_i - \tilde{x}_j )}  \prod_{\ell = 1}^M \frac{ K_{k_\ell} (a_{1:p}, b_{1:q} ) }{ \tilde{x}_\ell^{k_\ell (n-r) }} \\
& \hspace*{2cm} \times \frac{1}{r! } \oint_{{\rm C}} \cdots \oint_{{\rm C}} \frac{ {\rm det} \left( z_j^{i-1} \right)_{i,j = 1}^r  {\rm det} \left( \bar{\mathbf{A}} \right)  }{ \prod_{j=1}^r  \left[ 2 \pi \iota  \prod_{s=1}^n (z_j - y_s) \right] } {\rm d} z_1 \cdots {\rm d} z_r   \; .
\end{align*}
Finally, integrating using the Andreief identity \cite{Andreief}:
\begin{align} \label{eq:Andr}
\int \cdots \int {\rm det}\left(  f_i (x_j) \right)_{i,j=1}^r  {\rm det}\left(  g_i (x_j) \right)_{i,j=1}^r  \prod_{j=1}^r  {\rm d} \mu (x_j) =
r! \, {\rm det} \left( \int f_i (x) g_j (x) {\rm d}\mu(x)   \right)_{i,j=1}^r  
\end{align}
gives the result.
\end{proof}

\subsection{Proof of Theorem \ref{th:wishart} (Models A and B)} \label{sec:proofwishart}

We prove this result by virtue of the moment generating function (MGF) of the LSS, which is
\begin{align*}
\mathcal{M}(\lambda) & = \E\left [\e^{\lambda\sum_{k=1}^n f \left ( x_k/n \right)}\right] \text{.}
\end{align*}
Using \eqref{eq:density}, upon applying the transformations $x_j \rightarrow n x_j$ and $z_j \rightarrow n z_j$, we obtain
\begin{align}
\mathcal{M}(\lambda) & =  \frac{ K_{n}^{(A,B)}}{(2\imath\pi)^r}\oint_{\tilde{\rm C}}\cdots\oint_{\tilde{\rm C}} {\text{det}} \left ( z_j^{i-1}\right )_{i,j=1}^r\text{det}  \left ( l_i(nz_j)\right )_{i,j=1}^r Z_n(\lambda,z_1,\dots,z_r) \prod_{j=1}^r\dif z_j \label{eq:MGF-I}
\end{align}
where
\begin{align}
Z_n(\lambda,z_1,\dots,z_r)=\int_{\R_+^n} \prod_{i<j}^n(x_j-x_i)^2 \prod_{j=1}^n  \frac{x_j^{m-n}e^{-nx_j}}{\prod_{t=1}^r(z_t-x_j)} \e^{\lambda f(x_j)} \,\dif x_j 
 \label{eq:density2}
\end{align}
where the contour $\tilde{\rm C}$ encloses now counter-clockwise all scaled eigenvalues $x_1/n,\dots,x_n/n$ in its interior. This MGF expression has some structural similarity with that characterized in \cite{Passemier2}, which applied for the special case $r=1$, with the key differences being the second determinant (i.e., the one involving the $l_i (\cdot)$ functions),  and the product in the denominator of \eqref{eq:density2}. Nonetheless, here we may follow the same general approach, based on first employing a Coulomb fluid approximation of \eqref{eq:density2} followed by a saddlepoint approximation, with appropriate modifications.

\subsubsection{Coulomb fluid approximation of $Z_n(\lambda,z_1,\dots,z_r)$ in \eqref{eq:density2}}
It will be convenient to rewrite $Z_n(\lambda, z_1,\dots,z_r)$ in the equivalent form:
\begin{align}
Z_n(\lambda ,z_1,\dots,z_r) = \int_{\R_+^n} \e^{-\Phi(x_1,\dots,x_n)-\sum_{k=1}^n g(x_k)}\prod_{k=1}^n \dif x_k   \label{eq:Inew}
\end{align}
where
\begin{align}
g(x)= g(x, z_1,\dots,z_r) = - \lambda f(x) + \sum_{j=1}^r \ln(z_j-x) \text{,} \label{eq:g}
\end{align}
with
\begin{align*}
\Phi(x_1,\dots,x_n)=-2 \sum_{i<j}^n \ln|x_j-x_i| +n\sum_{j=1}^n v_0(x_j)
\end{align*}
where we have defined
\begin{align*}
v_0(x)=x-\left(\frac{m}{n}-1\right)\ln x  \text{.}
\end{align*}
Setting $g(x) = 0$ in (\ref{eq:Inew}), we also introduce
\begin{align*}
Z_n  = \int_{\R_+^n} \e^{-\Phi(x_1,\dots,x_n)}  \prod_{k=1}^n \dif x_k \text{,}
\end{align*}
which is simply a constant.

With this formulation, the results from \cite{Chen}, derived based on the Coulomb fluid method, now immediately suggest that as $n \to \infty$ with $m/n \to c$,  
\begin{align} 
Z_n(\lambda, z_1,\dots,z_r) \approx Z_n \e^{- \frac{S_1 (z_1,\dots,z_r) }{2} - S_2 (z_1,\dots,z_r)  }\label{eq:CF}
\end{align}
where
\begin{align}
S_1(z_1,\dots,z_r) &= \int_a^b g(x, z_1,\dots,z_r) \varrho(x, z_1,\dots,z_r) \, \dif x \label{eq:S1}\\
S_2(z_1,\dots,z_r)  &= n \int_a^b g(x, z_1,\dots,z_r) \tilde{\sigma}_0(x) \, \dif x \text{.} \label{eq:S2} 
\end{align}
Here $a= (1-\sqrt{c})^2$ and $b= (1+\sqrt{c})^2$, as defined in the theorem statement, whilst
\begin{align}
\tilde{\sigma}_0(x)=\frac{1}{2\pi}\frac{\sqrt{(b-x)(x-a)}}{x}\text{, } \quad x\in[a,b] \label{eq:sigma_0}
\end{align}
which is the Mar{\v c}enko-Pastur law (see \cite{Dyson2,Marcenko}).  Also, 

\begin{align}
\varrho(x, z_1,\dots,z_r )= -\lambda\rho_1 (x)+\rho_2  (x, z_1,\dots,z_r)   \label{eq:varrhoExp}
\end{align}
where 
\begin{align*}
\rho_1 (x)=\frac{1}{2\pi^2 \sqrt{(b-x)(x-a)}}\mathcal{P} \int_a^b \frac{\sqrt{(b-y)(y-a)}}{y-x} f'(y) \,\dif y
\end{align*}
and
\begin{align*}
\rho_2 (x, z_1,\dots,z_r) &= \sum_{j=1}^r\tilde{\rho}_2 (x, z_j)\text{,}
\end{align*}
where
\begin{align}
\tilde{\rho}_2 (x, z) &= \frac{1}{2\pi^2 \sqrt{(b-x)(x-a)}}\mathcal{P} \int_a^b \frac{ \sqrt{(b-y)(y-a)}}{y-x} \frac{1}{y-z} \,\dif y \text{, } \quad x\in[a,b]\nonumber\\
&= \frac{1}{2\pi\sqrt{(b-x)(x-a)}}\left(\frac{\sqrt{(z - a)(z - b)}}{z-x}-1\right)  \text{,}  \label{eq:rhoh}
\end{align}
where the last integration followed from \cite[Eq. (37)]{Passemier2}.

The objective is to obtain expressions for $S_1$ and $S_2$ in (\ref{eq:S1}) and (\ref{eq:S2}), respectively.  To this end, first consider $S_1$.  Substituting \eqref{eq:g} and \eqref{eq:varrhoExp} into \eqref{eq:S1} yields a quadratic equation in $\lambda$,
\begin{align}
S_1(z_1,\dots,z_r) =-\lambda^2 \sigma^2  - 2\lambda \sum_{j=1}^r\bar{\mu}(z_j) - \sum_{j=1}^r A_1(z_j)  \label{eq:S1f}
\end{align}
where $\sigma^2$ takes the form \eqref{eq:V1}, the linear coefficient $\bar{\mu}( \cdot )$ takes the form \eqref{eq:muVal} since (see \cite[Appendix B]{Passemier2} for details)
\begin{align*}
\bar{\mu}(z) &= \frac{1}{2}\int_a^b \left [  f(x) \tilde{\rho}_2 (x, z)   + \ln(z-x) \rho_1(x) \right] \dif x \\
&= \int_a^b   f(x) \tilde{\rho}_2 (x, z) \,\dif x \text{,} 
\end{align*}
whilst the constant term involves
 \begin{align*}
A_1(z) &=-\int_a^b   \ln(z-x) \tilde{\rho}_2  (x,  z  ) \, \dif x  \text{.} 
\end{align*}
Note that this last term is independent of the linear statistic $f( \cdot)$ and will not contribute to either the asymptotic mean or variance.

Focusing now on $S_2$, we substitute \eqref{eq:g} and \eqref{eq:sigma_0} into \eqref{eq:S2} to give
\begin{align}
S_2(z_1,\dots,z_r) &=-n \left ( \lambda \mu + \sum_{j=1}^r A_2( z_j ) \right )  \label{eq:S2f}
\end{align}
where $\mu$ takes the form \eqref{eq:mu}, whilst  
\begin{align*}
A_2( z ) = -\frac{1}{2\pi}\int_a^b \ln(z-x) \frac{\sqrt{(b-x)(x-a)}}{x}\, \dif x 
\end{align*}
is a constant which will contribute to the asymptotic mean in the sequel.

Combining \eqref{eq:CF} together with \eqref{eq:S1f} and \eqref{eq:S2f}, we obtain 
\begin{align}
Z_n(\lambda,z_1,\dots,z_r) \approx Z_n\e^{\lambda^2 \frac{\sigma^2}{2} + \lambda \left [ n\mu + \sum_{j=1}^r\bar{\mu}(z_j) \right ] + \frac{1}{2}\sum_{j=1}^rA_1(z_j) + n\sum_{j=1}^r A_2(z_j) } , \nonumber
\end{align}
which, upon substituting into \eqref{eq:MGF-I} gives, for large $n$,
 \begin{align}
\mathcal{M}(\lambda) \propto \mathcal{I}(\lambda)\e^{\lambda^2 \frac{\sigma^2}{2}+\lambda  n\mu}   \label{eq:MGFap2}
\end{align}
where
\begin{align*}
\mathcal{I}(\lambda) &=  \oint_{\tilde{\rm C}}\cdots\oint_{\tilde{\rm C}}  {\text{det}} \left ( z_j^{i-1}\right )_{i,j=1}^r{\text{det}} \left (l_i(nz_j)\right )_{i,j=1}^r  \e^{ \lambda \sum_{j=1}^r\bar{\mu}(z_j)  + \frac{1}{2}\sum_{j=1}^r A_1(z_j)  + n\sum_{j=1}^r A_2(z_j) }\prod_{j=1}^r\dif z_j  \\
&= \oint_{\tilde{\rm C}}\cdots\oint_{\tilde{\rm C}} {\text{det}} \left ( z_j^{i-1}\right )_{i,j=1}^r\text{det}\left (l_i(nz_j)\e^{ \lambda \bar{\mu}(z_j) + \frac{A_1(z_j)}{2}   + n A_2(z_j)}\right )_{i,j=1}^r  \prod_{j=1}^r\dif z_j\text{.} 
\end{align*}
Applying the Andreief identity \eqref{eq:Andr}, we obtain
\begin{align}
\mathcal{I}(\lambda) &= {\text{det}} \left ( \oint_{\tilde{\rm C}} z^{j-1}l_i(nz)\e^{ \lambda \bar{\mu}(z) + \frac{A_1(z)}{2}   + n A_2(z)}\, \dif z \right )_{i,j=1}^r\text{.} \label{eq:Ilambda}
\end{align}
With the MGF expressed in this form, we are now in a position to apply saddlepoint approximation techniques in order to deal with the determinant of contour integrals.  It is important to recognize that this determinant has dimension $r \times r$, which remains \emph{fixed} as $n$ is taken large.

\subsubsection{Saddlepoint approximation}

The saddlepoint approximation will be applied to each entry of the determinant above.  To this end, following \cite[Section 5.2.1--2]{Passemier2}, we have for large $n$ and $1\le i \le r$,
\begin{align} \label{eq:spapprox1}
\oint_{\tilde{\rm C}} z^{j-1}l_i(nz)\e^{ \lambda \bar{\mu}(z) + \frac{A_1(z)}{2}   + n A_2(z)}\, \dif z \approx  z_{0,i}^{j-1} \e^{\lambda\bar{\mu}(z_{0,i})+h(z_{0,i})}\text{,}
\end{align}
where
\begin{align*}
z_{0,i}=\begin{cases}
\frac{(1+c\delta_i)(1+\delta_i)}{\delta_i}, &  \text{for Model A} \\
\frac{(1+\nu_i)(c+\nu_i)}{\nu_i},&  \text{for Model B}
\end{cases}
\end{align*}
is the saddlepoint and $h(z_{0,i})$ is a function which does not depend on $\lambda$. 

\begin{remark}\label{rem:smodelAB} 
In order to find a saddlepoint outside $[a,b]$, we have to take the following specific branches for the square root $\sqrt{(z_{0,i}-a)(z_{0,i}-b)}$:
\begin{list}{$\bullet$}{\leftmargin=2em}
\item When $0<\delta_i\le 1/\sqrt{c}$ for Model A (resp. $0<\nu_i\le\sqrt{c}$ for Model B), the branch is chosen so that the signs of the real and imaginary part of $\sqrt{(z_{0,i}-a)(z_{0,i}-b)}$ match those of $z_{0,i}-c-1$; \\
\item When $\delta_i > 1/\sqrt{c}$ for Model A (resp. $\nu_i > \sqrt{c}$ for Model B), the signs are chosen to be opposite.
\end{list}
The square root in both cases then evaluates to the common form:
\begin{align*}
\sqrt{(z_{0,i}-a)(z_{0,i}-b)}=\begin{cases}
\frac{1-c\delta_i^2}{\delta_i}, &  \text{for Model A} \\
\frac{c}{\nu_i}-\nu_i&  \text{for Model B}
\end{cases} 
\end{align*}
This square root is also encountered in the theorem statement, for which the same branch should be taken as indicated above (see \cite{Passemier2} for more details).
\end{remark}

 Plugging \eqref{eq:spapprox1} into \eqref{eq:Ilambda}, we have
\begin{align*}
\mathcal{I}(\lambda) &\approx   {\text{det}} \left ( z_{0,j}^{i-1}\right )_{i,j=1}^r\e^{\lambda\sum_{j=1}^r\bar{\mu}(z_{0,j})+\sum_{j=1}^r h(z_{0,j})}
\end{align*}
as $n\rightarrow \infty$. Finally, using the above expression and omitting the leading determinant and all other terms which are independent of $\lambda$ (these are absorbed into a proportionality constant),  under the same large-$n$ asymptotics, we can rewrite \eqref{eq:MGFap2} as
\begin{align*}
\mathcal{M}(\lambda) \propto  \exp \left ( \lambda^2\frac{\sigma^2}{2}+\lambda\left (n\mu+ \sum_{j=1}^r\bar{\mu}(z_{0,j})\right ) \right ) \; .
\end{align*}
This is recognized as the MGF of a Gaussian distribution with mean $n\mu+ \sum_{j=1}^r\bar{\mu}(z_{0,j})$ and variance $\sigma^2$. \qed

\subsection{Proof of Theorem \ref{th:F} (Model C)} \label{sec:SaddleC}
The proof is similar to Theorem \ref{th:wishart}, thus here we give just a brief treatment. In this case, we will evaluate the MGF 
\begin{align*}
\mathcal{M}(\lambda) & = \E\left [\e^{\lambda\sum_{k=1}^n f \left ( x_k \right)}\right] 
\end{align*}
which upon using \eqref{eq:densityF} yields
\begin{align}
\mathcal{M}(\lambda) & =  \frac{ K_{n}^{(C)}}{(2\imath\pi)^r}\oint_{\rm C}\cdots\oint_{\rm C} {\text{det}} \left ( z_j^{i-1}\right )_{i,j=1}^r\text{det}  \left ( l_i(z_j)\right )_{i,j=1}^r Z_n(\lambda,z_1,\dots,z_r) \prod_{j=1}^r\dif z_j \label{eq:MGF-IF}
\end{align}
where
\begin{align*}
Z_n(\lambda, z_1,\dots,z_r) =   \int_{(0,1)^n}  \prod_{i<j}^n (\mathsf{f}_j-\mathsf{f}_i)^2\prod_{j=1}^n \frac{\mathsf{f}_j^{m_1-n}(1-\mathsf{f}_j)^{m_2-n}}{\prod_{t=1}^r (z_t-\mathsf{f}_j)}
{\rm e}^{\lambda f \left ( \frac{\mathsf{f}_j}{1-\mathsf{f}_j}\right)}\, \dif \mathsf{f}_j  
\end{align*}
and
\begin{align*}
l_i(z)&=\,_1F_1\left(m_1+m_2-n+1,m_1-n+1,n\nu_i z\right ) \text{.}
\end{align*}

Following the derivation of Theorem \ref{th:wishart}, in this case we obtain
\begin{align*}
v_0(x)&=(1-c_1)\ln x + (1-c_2)\ln (1-x)\\
\tilde{\sigma}_0(x)&=\frac{c_1+c_2}{2\pi} \frac{\sqrt{(b-x)(x-a)}}{x(1-x)}\text{, } \quad x\in[a,b] 
\end{align*}
with $a$ and $b$ defined as in the theorem statement. Using the results of \cite{Chen},  we have as $n\rightarrow \infty$ such that $m_1/n\rightarrow c_1$ and $m_2/n\rightarrow c_2$,
\begin{align} 
Z_n(\lambda, z_1,\dots,z_r) \approx Z_n \e^{- \frac{S_1 (z_1,\dots,z_r) }{2} - S_2 (z_1,\dots,z_r)  }\label{eq:CF1F1}
\end{align}
with
\begin{align*}
S_1 (z_1,\dots,z_r) =-\lambda^2 \sigma_\F^2  - 2\lambda\sum_{j=1}^r\bar{\mu}_\F(z_j) -\sum_{j=1}^r A_{\F,1}(z_j) 
\end{align*}
where $\sigma_\F^2$ takes the form \eqref{eq:V1F}, the linear coefficient $\bar{\mu}_\F( \cdot )$ takes the form \eqref{eq:muValF}, whilst the constant term is
 \begin{align*}
A_{\F,1}(z) &=-\int_a^b   \ln(z-x) \tilde{\rho}_2  (x,  z  ) \, \dif x\text{,} 
\end{align*}
where $\tilde{\rho}_2 (x, z)$ is given by \eqref{eq:rhoh}. This constant term $A_{\F,1}$ is independent of the LSS $f( \cdot)$ and will not contribute to either the asymptotic mean or variance.

For $S_2$, we have
\begin{align*}
S_2(z_1,\dots,z_r) &=-n\left (\lambda \mu_\F - \sum_{j=1}^r A_{\F,2}( z_j ) \right )
\end{align*}
where $\mu_\F$ takes the form \eqref{eq:muF}, whilst  
\begin{align*}
A_{\F,2}( z ) = -\frac{c_1+c_2}{2\pi}\int_a^b \ln(z-x)  \frac{\sqrt{(b-x)(x-a)}}{x(1-x)}\, \dif x  \text{.}
\end{align*}
Substituting \eqref{eq:CF1F1} into \eqref{eq:MGF-IF} we obtain that, as $n \rightarrow \infty$ with $m_1/n\rightarrow c_1$  and  $m_2/n\rightarrow c_2$,
\begin{align}
\mathcal{M}(\lambda) \propto   \mathcal{I}(\lambda) \e^{\lambda^2\frac{\sigma^2_\F}{2}+\lambda n\mu_\F}\label{eq:MGFap21F1}
\end{align}
with
\begin{align}
\mathcal{I}(\lambda) &=  \oint_{\rm C}\cdots\oint_{\rm C} {\text{det}} \left ( z_j^{i-1}\right )_{i,j=1}^r{\text{det}} \left (l_i(z_j)\right )_{i,j=1}^r  \e^{ \lambda \sum_{j=1}^r\bar{\mu}_\F(z_j)  + \frac{1}{2}\sum_{j=1}^r A_{\F,1}(z_j)  + n\sum_{j=1}^r A_{\F,2}(z_j) }\prod_{j=1}^r\dif z_j  \nonumber\\
&= \oint_{\rm C}\cdots\oint_{\rm C} \text{det} \left ( z_j^{i-1}\right )_{i,j=1}^r {\text{det}} \left (l_i(z_j)\e^{ \lambda \bar{\mu}_\F(z_j) + \frac{A_{\F,1}(z_j)}{2}   + n A_{\F,2}(z_j)}\right )_{i,j=1}^r   \prod_{j=1}^r\dif z_j\text{.} \nonumber
\end{align}
Applying the Andreief identity \eqref{eq:Andr}, we obtain
\begin{align}\label{eq:spapprox2}
\mathcal{I}(\lambda) &= {\text{det}} \left ( \oint_{\rm C} z^{j-1}l_i(z)\e^{ \lambda \bar{\mu}_\F(z) + \frac{A_{\F,1}(z)}{2}   + n A_{\F,2}(z)}\, \dif z \right )_{i,j=1}^r \text{.} 
\end{align}

As before, we apply the saddlepoint method to deal with the contour integrals inside the determinant. In particular, following \cite[Section 5.3]{Passemier2}, we have for large $n$ and $1\le i \le r$,
\begin{align}
\oint_{\rm C} z^{j-1}l_i(z)\e^{ \lambda\bar{\mu}_\F(z) + \frac{A_{\F,1}(z)}{2}   + n A_{\F,2}(z)}\, \dif z \approx z_{0,i}^{j-1} \e^{\lambda\bar{\mu}_\F(z_{0,i})+h(z_{0,i})}\text{,}\label{eq:AH2}
\end{align}
where
\begin{align*}
z_{0,i}=\frac{(1+\nu_i)(c_1+\nu_i)}{\nu_i(c_1+c_2+\nu_i)}
\end{align*}
is the saddlepoint and $h(z_{0,i})$ is a function which does not depend on $\lambda$.
\begin{remark}\label{rem:smodelC} 
In order to have a saddlepoint outside $[a,b]$, we have to take the following specific branches for the square root $\sqrt{(z_{0,i}-a)(z_{0,i}-b)}$:
\begin{list}{$\bullet$}{\leftmargin=2em}
\item When $0<\nu_i\le\tilde{c} := \frac{c_1+\sqrt{c_1 c_2(c_1+c_2-1)}}{c_2-1}$, the branch is chosen so that the signs of the real and imaginary parts match those of $1-c_1+(c_1+c_2) z_{0,i}$; \\
\item When $\nu_i > \tilde{c}$, the signs are chosen to be opposite.
\end{list}
The square root in both cases then evaluates to the common form:
\begin{align*}
\sqrt{(z_{0,i}-a)(z_{0,i}-b)}&=\frac{c_1(c_1+c_2)+2c_1\nu_i-(c_2-1)\nu_i^2}{\nu_i(c_1+c_2+\nu_i)(c_1+c_2)}\text{.}
\end{align*}
This square root is also encountered in theorem statement, for which the same branch should be taken as indicated above. 
\end{remark}

Applying \eqref{eq:spapprox2} in \eqref{eq:Ilambda} produces
\begin{align*}
\mathcal{I}(\lambda) &\approx  \text{det} \left ( z_{0,j}^{i-1}\right )_{i,j=1}^r\e^{\lambda\sum_{j=1}^r\bar{\mu}_\F(z_{0,j})+\sum_{j=1}^r h(z_{0,j})}
\end{align*}
as $n\rightarrow \infty$. Finally, using the above expression, omitting the leading determinant and all other terms which are independent of $\lambda$ (as before), we can rewrite \eqref{eq:MGFap21F1} for large $n$ as
\begin{align*}
\mathcal{M}(\lambda) \propto  \exp \left ( \lambda^2\frac{\sigma_\F^2}{2}+\lambda\left (n\mu_\F+ \sum_{j=1}^r\bar{\mu}_\F(z_{0,j})\right )  \right ) \; .
\end{align*}
This is recognized as the MGF of a Gaussian distribution with mean $n\mu_\F+ \sum_{j=1}^r\bar{\mu_\F}(z_{0,j})$ and variance $\sigma_\F^2$. \qed

\subsection{Derivations with spike multiplicities} \label{sec:ext}

Here, we will limit the discussion to Models A and B, with the extension to Model C being straightforward. 
Using the joint density in \eqref{eq:densityMult}, rather than \eqref{eq:density},  we may  follow the same steps as for the proof of Theorem \ref{th:wishart} in Section \ref{sec:proofwishart}, until the point just prior to where the saddlepoint approximation is applied. Indeed, we obtain the same formula as (\ref{eq:MGFap2}), i.e., 
 \begin{align}
\mathcal{M}(\lambda) \propto \mathcal{I}(\lambda)\e^{\lambda^2 \frac{\sigma^2}{2}+\lambda  n\mu}    \label{eq:MultiM}
\end{align}
with $\mu$ and $\sigma^2$ again given by  (\ref{eq:mu}) and (\ref{eq:V1}) respectively, but in this case $\mathcal{I}(\lambda)$ now admits
\begin{align}
\mathcal{I}(\lambda)&= \oint_{\tilde{\rm C}}\cdots\oint_{\tilde{\rm C}} {\text{det}} \left ( z_j^{i-1}\right )_{i,j=1}^r\text{det}(\tilde{\mathbf{A}})  \prod_{j=1}^r\dif z_j\text{,} \label{eq:Ilambdabef2}
\end{align}
where $\tilde{\mathbf{A}} = \left[\tilde{\mathbf{A}}_1^T,\dots,\tilde{\mathbf{A}}_M^T\right]^T$ is an $r \times r$ matrix, with $k_\ell \times r$ submatrix $\tilde{\mathbf{A}}_\ell$ taking entries
\begin{align*}
( \tilde{\mathbf{A}}_\ell )_{i, j} =
\begin{cases}
 z_j^{k_\ell - i}\e^{ \lambda \bar{\mu}(z_j) + \frac{A_1(z_j)}{2}   + n A_2(z_j)}\e^{  \frac{\delta_\ell}{\delta_\ell+1} n z_j } &\text{for Model A}\\
z_j^{k_\ell - i} \e^{ \lambda \bar{\mu}(z_j) + \frac{A_1(z_j)}{2}   + n A_2(z_j)}{}_0 F_1 ( m-n+k_\ell-i+1; n^2\nu_\ell z_j )&\text{for Model B,}
\end{cases}
\end{align*}
with the contour $\tilde{{\rm C}}$ defined as before. Now,  applying the Andreief-Heine identity \eqref{eq:Andr}, \eqref{eq:Ilambdabef2} becomes
\begin{align}
\mathcal{I}(\lambda) &= {\text{det}} (\hat{\mathbf{A}})\text{,} \label{eq:Imultdet}
\end{align}
where $\hat{\mathbf{A}} = \left[\hat{\mathbf{A}}_1^T,\dots,\hat{\mathbf{A}}_M^T\right]^T$ is an $r \times r$ matrix, with $k_\ell \times r$ submatrix $\hat{\mathbf{A}}_\ell$ taking entries
\begin{align}\label{eq:Ilambdamult}
( \hat{\mathbf{A}}_\ell )_{i, j} = 
\begin{cases}
 \oint_{\tilde{\rm C}} z^{k_\ell-i+j-1}\e^{ \lambda \bar{\mu}(z) + \frac{A_1(z)}{2}   + n A_2(z)}\e^{  \frac{\delta_\ell}{\delta_\ell+1} n z }\, \dif z  &\text{for Model A}\\
\oint_{\tilde{\rm C}} z^{k_\ell-i+j-1}\e^{ \lambda \bar{\mu}(z) + \frac{A_1(z)}{2}   + n A_2(z)}{}_0 F_1 ( m-n+k_\ell-i+1; n^2\nu_\ell z_j )\, \dif z &\text{for Model B.}
\end{cases}
\end{align}
The next step is to apply a saddlepoint approximation to the above integral.

{\bf A key issue:}  Naturally, we may try the same approach as used to prove Theorem \ref{th:wishart}, based on the \emph{same} saddlepoint approximation, which is effectively a \emph{leading-order} approximation in $n$.  We now show, however, that this approach is problematic. In particular, mirroring the same steps as before yields
\begin{align}
( \hat{\mathbf{A}}_\ell )_{i, j} \approx  z_{0,\ell}^{k_\ell-i+j-1} \e^{\lambda\bar{\mu}(z_{0,\ell})+h(z_{0,\ell})} =: (\check{\mathbf{A}}_\ell )_{i, j} \text{,}\label{eq:AHmult}
\end{align}
where
\begin{align}\label{eq:spoint}
z_{0,\ell}=\begin{cases}
\frac{(1+c\delta_\ell)(1+\delta_\ell)}{\delta_\ell} &  \text{for Model A} \\
\frac{(1+\nu_\ell)(c+\nu_\ell)}{\nu_\ell}&  \text{for Model B}
\end{cases} 
\end{align}
is the saddlepoint and $h(z_{0,\ell})$ is a function which does not depend on $\lambda$.  Now, consider an index $\ell$ for which $k_\ell>1$. In this case, the rows of the sub-matrix $(\check{\mathbf{A}}_\ell )_{i, j}$ are proportional to one another, and thus 
\begin{align}
\mathcal{I}(\lambda) \; = \; \text{det} (\check{\mathbf{A}} ) \; = \; 0.  \label{eq:ZeroDet}
\end{align} 
From the above argument, it is clear that, when there exist spike multiplicities, in order to capture the asymptotic behavior in $n$ of (\ref{eq:Ilambdabef2}) (and thus of the MGF),  a \emph{more refined saddlepoint approximation} is needed.

Indeed, more refined saddlepoint approximations are possible \cite{Olver,Bleistein}, which include correction terms in $n$; however, the complexity of the approximation also increases substantially with the addition of more terms (and it is also difficult to give a generic formula, which effectively would comprise an expansion in $n$ to arbitrary degree of accuracy).  Moreover, it turns out that the number of correction terms required to facilitate our analysis (i.e., to capture the non-zero leader order behavior of $\mathcal{I}(\lambda)$ in (\ref{eq:Ilambdabef2})) is tied to the number of coinciding spikes; i.e., as $k_\ell$ increases, then so does the required number of terms. For this reason, a general proof for arbitrary multiplicities of spikes is not forthcoming. Nonetheless, in order to demonstrate the procedure, in the following we will consider a saddlepoint approximation with first-order correction.  In this case, we demonstrate that the desired large-$n$ asymptotics are appropriately captured for $r=2$ and $r=3$, but \emph{not} for $r = 4$ (which requires subsequent refinement with additional correction terms).

\subsubsection{Saddlepoint approximation with first-order correction}

With the inclusion of a first-order correction term (see \cite{Olver}), for large $n$, the refined saddlepoint approximation applied to \eqref{eq:Ilambdamult} yields
\begin{align}
( \hat{\mathbf{A}}_\ell )_{i, j} &= \oint_{\tilde{\rm C}} \e^{-np_\ell(z)}q_{k_\ell-i+j-1}(z) \,\dif z\nonumber\\
& \approx \e^{-np_\ell(z_{0,\ell})}\sqrt{\frac{2\pi}{n}}\frac{q_{k_\ell-i+j-1}(z_{0,\ell})}{\sqrt{p_\ell''(z_{0,\ell})}}\left (1+ \Psi_{k_\ell-i+j-1}(z_{0,\ell}) \right )\text{,} \label{eq:AH}
\end{align}
where $z_{0,\ell}$ is the saddlepoint given in \eqref{eq:spoint}, whilst
\begin{align*}
\Psi_{k_\ell-i+j-1}(z) = \frac{1}{n} \frac{1}{2 p_\ell''(z)}\left ( \frac{q_{k_\ell-i+j-1}''(z)}{q_{k_\ell-i+j-1}(z)}-\frac{p_\ell'''(z)q_{k_\ell-i+j-1}'(z)}{p_\ell''(z)q_{k_\ell-i+j-1}(z)} + \frac{5p''''(z)^2}{12p_\ell''(z)}-\frac{p_\ell''''(z)}{4p_\ell''(z)}\right )
\end{align*}
for which
\begin{align*}
p_\ell(z)=
\begin{cases}
- ( \delta_\ell/(1+\delta_\ell)z + A_2(z))& \text{for Model A}\\
- ( \sqrt{t_\ell(z)} + (1-c)\ln \left ( c-1+ \sqrt{t_\ell(z)}\right ) + A_2(z)  & \text{for Model B,}
\end{cases}
\end{align*}
with $t_\ell(z)=(c-1)^2+4\nu_\ell z$. Also, $q_{k_\ell-i+j-1}=z^{k_\ell-i+j-1}q_\ell(z)$, where
\begin{align*}
q_\ell(z) &=
\begin{cases}
 \e^{ \lambda \bar{\mu}(z) + \frac{A_{1}(z)}{2}}& \text{for Model A}\\
 \e^{ \lambda \bar{\mu}(z) + \frac{A_{1}(z)}{2} + (c-1) \ln(2\sqrt{\nu_\ell}) + \ln((c-1)^2+4\nu_\ell z)}& \text{for Model B.}
\end{cases}
\end{align*}
One can verify that if $\Psi_{k_\ell-i+j-1}$ is omitted, then \eqref{eq:AH} reduces to (\ref{eq:AHmult}).

Plugging \eqref{eq:AH} into \eqref{eq:Imultdet}, we have, as $n\rightarrow \infty$,
\begin{align}
\mathcal{I}(\lambda) &\approx \sqrt{\frac{(2\pi)^r}{n^r}}  \,{\text{det}}  ( \acute{\mathbf{A}})\e^{\lambda\sum_{\ell=1}^M k_\ell\bar{\mu}(z_{0,\ell})+\sum_{\ell=1}^M k_\ell h_\ell(z_{0,\ell})}\text{,} \label{eq:Ifinal}
\end{align} 
where 
\begin{align*}
h_\ell(z)=
\begin{cases}
\frac{A_{1}(z)}{2}- np_\ell(z)-\frac{1}{2}\ln(p''_\ell(z))-\ln(2np''_\ell(z)) & \text{for Model A}\\
\frac{A_{1}(z)}{2} + (c-1) \ln(2\sqrt{\nu_l}) + \ln t(z)- np_\ell(z)-\frac{3}{2}\ln(p''_\ell(z))-\ln(2n)& \text{for Model B,}
\end{cases}
\end{align*}
which shows no dependence on  $\lambda$, whilst $\acute{\mathbf{A}} = [\acute{\mathbf{A}}_1^T,\dots,\acute{\mathbf{A}}_M^T]^T$ is an $r \times r$ matrix, with $k_\ell \times r$ submatrix $\acute{\mathbf{A}}_\ell$ taking entries
\begin{align*}
( \acute{\mathbf{A}}_\ell )_{i,j} = n \,  2 p_\ell''(z_{0,\ell}) z_{0,\ell}^{k_\ell - i +j-1} \left (1+ \Psi_{k_\ell-i+j-1}(z_{0,\ell}) \right ) \; .
\end{align*}

Using \eqref{eq:Ifinal}, omitting all terms which we know are independent of $\lambda$ (i.e., absorbing them into the proportionality constant), we can now rewrite \eqref{eq:MultiM} as
\begin{align*}
\mathcal{M}(\lambda) \propto  \exp \left ( \lambda^2\frac{\sigma^2}{2}+\lambda\left (n\mu+ \sum_{\ell=1}^M k_\ell\bar{\mu}(z_{0,\ell})\right ) - \ln \text{det}(\acute{\mathbf{A}} )   \right ) 
\end{align*}
for large $n$. 

From this, the $k$-th cumulant of $\sum_{k=1}^n f(x_k/n)$ is computed as
\begin{align*}
{\rm c}_k= \frac{\dif^k }{\dif \lambda^k}\left [\ln\mathcal{M}(\lambda)\right]_{|_{\lambda=0}}\text{.}
\end{align*}
The MGF above would represent a Gaussian distribution with mean $n\mu+ \sum_{\ell=1}^M k_\ell\bar{\mu}(z_{0,\ell})$ and variance $\sigma^2$, provided that 
\begin{align*}
\frac{\dif^k }{\dif \lambda^k}\left [ \ln( \text{det}(\acute{\mathbf{A}}) )\right]_{|_{\lambda=0}} = O \left (\frac{1}{n}\right )
\end{align*}
for $k\ge 1$.  We will prove this below for $r = 2$ and $r = 3$; whilst demonstrating the inability to capture the case $r= 4$.

\paragraph{\em Case of $r=2$} With spike multiplicity, we have $M=1$, whilst $k_1=2$; i.e., a single spike of multiplicity two. In this case, the determinant $\text{det} (\acute{\mathbf{A}})$ does not evaluate to zero, as obvserved for 
$\text{det} (\check{\mathbf{A}} )$ in (\ref{eq:ZeroDet}), but rather gives
\begin{align*}
\text{det} (\acute{\mathbf{A}}) &= (n  2 p_1''(z_{0,1}))^2 \, {\rm det} \left ( \begin{matrix}
 z_{0,1} \left (1+ \Psi_{1}(z_{0,1}) \right ) &  z_{0,1}^{2} \left (1+ \Psi_{2}(z_{0,1}) \right )\\
 1+ \Psi_{0}(z_{0,1}) &  z_{0,1} \left (1+ \Psi_{1}(z_{0,1}) \right )
\end{matrix}\right) \\
&=(n  2 p_1''(z_{0,1})z_{0,1})^2  \left [\left (1+ \Psi_{1}(z_{0,1}) \right )^2 - \left (1+ \Psi_{0}(z_{0,1}) \right ) \left (1+ \Psi_{2}(z_{0,1}) \right ) \right ]
\end{align*}
which, for both Models A and B, is of the form 
\begin{align*}
\text{det} (\acute{\mathbf{A}}) = A\lambda^2 + B\lambda + C_1+nC_2\text{,}
\end{align*}
where $A,B,C_1,C_2$ do not depend on $\lambda$ nor $n$. Consequently,
\begin{align*}
\frac{\dif^k }{\dif \lambda^k}\left [ \ln( \text{det}(\acute{\mathbf{A}}) )\right]_{|_{\lambda=0}} = \begin{cases}O \left (n^{-1} \right ) & 1\le k \le 2\\ 0 & k > 2
\end{cases}
\end{align*}
thereby establishing the desired Gaussianity.  Thus, for $r = 2$, Theorem \ref{th:wishart} still holds, regardless of whether the two spikes are distinct or coincident.

\paragraph{\em Case of $r=3$}  With spike multiplicites, we have two scenarios to consider: 

\begin{list}{$\bullet$}{\leftmargin=2em}
\item $M=1$, with $k_1=3$; i.e., a single spike of multiplicity three. In this case,
\begin{align*}
\text{det} (\acute{\mathbf{A}}) &= (n 2 p_1''(z_{0,1}))^3 {\rm det} \left( \begin{matrix}
 z_{0,1}^2 \left (1+ \Psi_{2}(z_{0,1}) \right ) &  z_{0,1}^{3} \left (1+ \Psi_{3}(z_{0,1}) \right ) & z_{0,1}^{4} \left (1+ \Psi_{4}(z_{0,1}) \right )\\
 z_{0,1} \left (1+ \Psi_{1}(z_{0,1}) \right ) &  z_{0,1}^{2} \left (1+ \Psi_{2}(z_{0,1}) \right ) & z_{0,1}^{3} \left (1+ \Psi_{3}(z_{0,1}) \right )\\
 1+ \Psi_{0}(z_{0,1}) &  z_{0,1} \left (1+ \Psi_{1}(z_{0,1})\right ) & z_{0,1}^{2} \left (1+ \Psi_{2}(z_{0,1}) \right )
\end{matrix}\right) \\
&= C\text{,}
\end{align*}
where $C$ does not depend on $\lambda$ nor $n$. The same form holds for Models A and B. Consequently,
\begin{align*}
\frac{\dif^k }{\dif \lambda^k}\left [ \ln( \text{det}(\acute{\mathbf{A}}) )\right]_{|_{\lambda=0}} = 0 \; \; \quad \text{ for all }k\ge 1 \; .
\end{align*}

\item $M=2$, with $k_1=2$ and $k_2=1$ (or alternatively, $k_1=1$ and $k_2=2$); i.e., one of the spikes has multiplicity two, a second (distinct) spike has multiplicity one. In this case,
\begin{align*}
\text{det} (\acute{\mathbf{A}}) &= 8n^3p_1''(z_{0,1})p_2''(z_{0,2})^2 \nonumber \\
& \hspace*{1cm} \times  {\rm det} \left( \begin{matrix}
 1+ \Psi_{0}(z_{0,1}) &  z_{0,1} \left (1+ \Psi_{1}(z_{0,1})\right ) & z_{0,1}^{2} \left (1+ \Psi_{2}(z_{0,1}) \right )\\
 z_{0,2} \left (1+ \Psi_{1}(z_{0,2}) \right ) &  z_{0,2}^{2} \left (1+ \Psi_{2}(z_{0,2}) \right ) & z_{0,1}^{3} \left (1+ \Psi_{3}(z_{0,2}) \right )\\
 1+ \Psi_{0}(z_{0,2}) &  z_{0,2} \left (1+ \Psi_{1}(z_{0,2})\right ) & z_{0,2}^{2} \left (1+ \Psi_{2}(z_{0,2}) \right )
\end{matrix}\right) \\
&= A\lambda^4 + B\lambda^3 + (C_1+nC_2)\lambda^2 +(D_1+nD_2)\lambda+E_1+nE_2+n^2E_3 \text{,}
\end{align*}
where $A,B,C_1,C_2,D_1,D_2,E_1,E_2,E_3$ do not depend on $\lambda$ nor $n$. The same form holds for Models A and B. Consequently,
\begin{align*}
\frac{\dif^k }{\dif \lambda^k}\left [ \ln( \text{det}(\acute{\mathbf{A}}) )\right]_{|_{\lambda=0}} = \begin{cases}O \left (n^{-1} \right ) & 1\le k \le 4\\ 0 & k > 4 
\end{cases}
\; .
\end{align*}
\end{list}
Thus, these results indicate that Theorem \ref{th:wishart} still holds for $r = 3$, regardless of whether all three of the spikes are distinct, or if some or all of them coincide.

\paragraph{\em Case of $r=4$} Here, we just consider the case $M=1$, $k_1=4$; i.e., a single spike of multiplicity four.  For this,
\begin{align*}
\text{det} (\acute{\mathbf{A}}) & =(2np_1''(z_{0,1}))^4\\
& \times  {\rm det} \left( \begin{matrix}
 z_{0,1}^{3} \left (1+ \Psi_{3}(z_{0,1}) \right ) & z_{0,1}^{4} \left (1+ \Psi_{4}(z_{0,1}) \right )& z_{0,1}^{5} \left (1+ \Psi_{5}(z_{0,1}) \right )& z_{0,1}^{6} \left (1+ \Psi_{6}(z_{0,1}) \right )\\
 z_{0,1}^2 \left (1+ \Psi_{2}(z_{0,1}) \right ) &  z_{0,1}^{3} \left (1+ \Psi_{3}(z_{0,1}) \right ) & z_{0,1}^{4} \left (1+ \Psi_{4}(z_{0,1}) \right )& z_{0,1}^{5} \left (1+ \Psi_{5}(z_{0,1}) \right )\\
 z_{0,1} \left (1+ \Psi_{1}(z_{0,1}) \right ) &  z_{0,1}^{2} \left (1+ \Psi_{2}(z_{0,1}) \right ) & z_{0,1}^{3} \left (1+ \Psi_{3}(z_{0,1}) \right )& z_{0,1}^{4} \left (1+ \Psi_{4}(z_{0,1}) \right )\\
 1+ \Psi_{0}(z_{0,1}) &  z_{0,1} \left (1+ \Psi_{1}(z_{0,1})\right ) & z_{0,1}^{2} \left (1+ \Psi_{2}(z_{0,1}) \right ) & z_{0,1}^{3} \left (1+ \Psi_{3}(z_{0,1}) \right )
\end{matrix}\right) \\
&= 0 \text{,}
\end{align*}
for both Models A and B. Thus, as observed previously for $\text{det} (\check{\mathbf{A}} )$ in (\ref{eq:ZeroDet}), yet again we encounter a zero-valued determinant.  

To handle the case $r=4$, further refinement of the saddlepoint expansion \eqref{eq:AH} is required. Taking the full expansion from \cite{Olver} applied to \eqref{eq:Ilambdamult} yields, as $n \rightarrow \infty$,
\begin{align}
( \hat{\mathbf{A}}_\ell )_{i, j} &= \oint_{\tilde{\rm C}} \e^{-np_\ell(z)}q_{k_\ell-i+j-1}(z) \,\dif z\nonumber\\
& \approx \e^{-np_\ell(z_{0,\ell})}\sqrt{\frac{2\pi}{np_\ell''(z_{0,\ell})}} \sum_{s=0}^{\infty}\frac{(2s)!}{2^s s!}\frac{R(s)}{(np_\ell''(z_{0,\ell}))^s} \label{eq:expgen}
\end{align}
where $z_{0,\ell}$ is the saddlepoint given in \eqref{eq:spoint}, whilst $R(s)$ is the residue at $z=z_{0,\ell}$ of
\begin{align*}
\frac{\sum_{k=0}^\infty \frac{(z-z_{0,\ell})^k}{k!}q_{k_\ell-i+j-1}^{(k)}(z_{0,\ell})}{(z-z_{0,\ell})^{2s+1}\left (1+\frac{2}{p_\ell''(z_{0,\ell})} \sum_{k=0}^\infty \frac{(z-z_{0,\ell})^k}{(2+k)!}p_{\ell}^{(2+k)}(z_{0,\ell})\right)^{s+\frac{1}{2}}}\text{,}
\end{align*}
and $f^{(k)}(\cdot)$ denotes the $k$-th derivative of the function $f(\cdot)$. By taking only the first two terms ($s=0$ and 1) in the expansion \eqref{eq:expgen}, we recover \eqref{eq:AH}. We calculate the term $s=3$ of the sum:
\begin{align*}
&R(3)=\frac{1}{8} \left [ \frac{2}{3}q^{(4)}-\frac{1}{9p_\ell''}\left (p_\ell^{(6)}q +p_\ell^{(5)}q' + 5\left (3p_\ell^{(4)}q''+4p_\ell^{(3)}q^{(3)} \right )\right )\right.\\
&+ \frac{7}{72(p_\ell'')^2} \left ( 5 (p_\ell^{(4)})^2q + 40p_\ell^{(3)}p_\ell^{(4)}q'+8 ( p_\ell^{(3)}(p_\ell^{(5)}q+5p_\ell^{(3)}q'')) \right )\\
& \left.- \frac{35}{36}\left( \frac{p_\ell^{(3)}}{p_\ell''}\right ) \left ( 3p_\ell^{(4)}q+4p_\ell^{(3)}q'\right )+\frac{385}{216} \left (\frac{p_\ell^{(3)}}{p_\ell''} \right )^4q\ \right]
\end{align*}
where $q^{(k)}:=q_{k_\ell-i+j-1}^{(k)}$ and the functions are evaluated at $z_{0,\ell}$.

We do not further pursue this here, as the derivation becomes more lengthy and unwieldy as the number of spikes $r$ (or, more precisely, the number of spike multiplicities) and the number of terms in the necessarily-refined saddlepoint expansion increase.  Nonetheless, whilst we do not provide a rigorous proof of this claim, our results suggest that one may take any desired number of spikes to coincide, and the results in Theorem \ref{th:wishart} still apply.  The same is true for Theorem \ref{th:F}.

\bibliographystyle{alpha}
\bibliography{biblio}
\end{document}